\newtheorem{thm}{Theorem}[section]
\newtheorem{cor}[thm]{Corollary}
\newtheorem{lem}[thm]{Lemma}
\newtheorem{prop}[thm]{Proposition}
\theoremstyle{definition}
\theoremstyle{remark}
\newtheorem{rem}[thm]{Remark}
\numberwithin{equation}{section}
\newcommand{\jf}[2]{\int_{#1}{#2}}
\newcounter{stepnum}
\newcommand{\p}{\slashed\partial}
\begin{document}

\title[Energy identity for a class of approximate Dirac-harmonic maps]{Energy identity for a class of approximate Dirac-harmonic maps from surfaces with boundary}

\author[Jost]{J\"urgen Jost}
\address{Max Planck Institute for Mathematics in the Sciences\\ Inselstrasse 22\\ 04103 Leipzig, Germany}
\address{Department of Mathematics\\ Leipzig University\\ 04081 Leipzig, Germany}
\email{jost@mis.mpg.de}

\author[Liu]{Lei Liu}%
\address{Max Planck Institute for Mathematics in the Sciences\\ Inselstrasse 22 \\  04103 Leipzig,  Germany }
\email{leiliu@mis.mpg.de }%

\author[Zhu]{Miaomiao Zhu}
\address{School of Mathematical Sciences, Shanghai Jiao Tong University\\ 800 Dongchuan Road \\ Shanghai, 200240 \\China}
\email{mizhu@sjtu.edu.cn}

\thanks{M. Zhu was supported in part by National Natural Science Foundation of China (No. 11601325). We would like to thank
the referee for careful comments and useful suggestions in improving the presentation of the paper.}

\subjclass[2010]{53C43 58E20}
\keywords{Dirac-harmonic maps, approximate Dirac-harmonic maps, Dirac-harmonic map flow, energy identity, boundary blow-up}

\date{\today}
\begin{abstract}
For a sequence of coupled fields $\{(\phi_n,\psi_n)\}$ from a compact Riemann surface $M$ with smooth boundary to a general compact Riemannian manifold with uniformly bounded energy and satisfying the Dirac-harmonic system up to some uniformly controlled error terms, we show that the energy identity holds during a blow-up process near the boundary. As an application to the heat flow of Dirac-harmonic maps from surfaces with boundary, when such a flow blows up at infinite time, we obtain an energy identity.
\end{abstract}
\maketitle

\section{introduction}

\
This paper is a contribution to the study of coupled field equations on Riemann surfaces, merging the theory of harmonic maps from surfaces with a mathematical version of the nonlinear supersymmetric of quantum field theory. The corresponding action functional couples a term involving what is called the energy of a map from a surface to some Riemannian manifold with a Dirac action for a nonlinear spinor field. The solutions of the resulting Euler-Lagrange equations are called Dirac-harmonic maps \cite{chen2006dirac}. While they share many properties with harmonic maps, their analysis is much more subtle, because the Dirac action is not bounded from below. Therefore, standard variational methods do not apply to show the existence of solutions under general conditions. As an alternative, a new type of mixed parabolic-elliptic partial differential equations has been introduced \cite{CJSZ2014} and further investigated \cite{jost-Liu-Zhu-02} in order to develop new tools for the existence problem. The existence problem is still not fully solved. In order to make progress, results about the behavior at singularities that are known and classical for harmonic maps need to be extended to the Dirac-harmonic case. This is where the contribution of the present paper lies. We study the blow-up process and show a so-called energy identity, that is, all the energy that is removed from the map gets transferred to the bubbles that represent the singularity. In fact, we study this at the boundary, because boundary value problems currently offer the situation where the existence theory is best developed and most promising. In order that our results be applicable to the parabolic case, we have to consider approximate solutions, that is, fields that satisfy the Euler-Lagrange equations up to some controlled error term. This naturally makes the analysis more difficult.

We now fix the technical setting to describe our results in more precise terms.
Let $(M,h)$ be a compact connected Riemann surface with smooth boundary $\partial M$, equipped with a Riemannian metric $h$ and with a fixed spin structure, $\Sigma M$ be the spinor bundle over $M$ and $\langle\cdot,\cdot\rangle_{\Sigma M}$ be the metric on $\Sigma M$ induced by the Riemannian metric $h$. Choosing a local orthonormal basis ${e_\alpha,\alpha=1,2}$ on $TM$, the usual Dirac operator is
defined as $\slashed\partial:=e_\alpha\cdot\nabla_{e_\alpha}$, where $\nabla$ is the spin connection on $\Sigma M$, $\cdot$ is the Clifford multiplication, which satisfies the  skew-adjointness property  $$\langle X\cdot \psi_1,\psi_2\rangle_{\Sigma M}=-\langle \psi_1,X\cdot\psi_2\rangle_{\Sigma M}$$ for any $X\in\Gamma(TM)$, $\psi_i\in\Gamma(\Sigma M)$, $i=1,2$.

Let $\phi$ be a smooth map from $M$ to another compact Riemannian manifold $(N,g)$ with dimension $n\geq2$. Let $\phi^{*}TN$ be the pull-back bundle of $TN$ by $\phi$ and then we get the twisted bundle $\Sigma M\otimes\phi^{*}TN$. Naturally, there is a metric $\langle\cdot,\cdot\rangle_{\Sigma M\otimes\phi^{*}TN}$ on $\Sigma M\otimes\phi^{*}TN$ which is induced from the metrics on $\Sigma M$ and $\phi^{*}TN$. Also we have a natural connection $\widetilde{\nabla}$ on $\Sigma M\otimes\phi^{*}TN$ which is induced from the connections on $\Sigma M$ and $\phi^{*}TN$. Let $\psi$ be a section of the bundle $\Sigma M\otimes\phi^{*}TN$. In local coordinates, it can be written as
\[
\psi=\psi^i\otimes\partial_{y^i}(\phi),
\]
where each $\psi^i$ is a usual spinor on $M$ and ${\partial_{y^i}}$ is the nature local basis on $N$. Then $\widetilde{\nabla}$ becomes
\begin{eqnarray}\label{z2}
\widetilde{\nabla}\psi=\nabla\psi^i\otimes\partial_{y^i}(\phi)
+(\Gamma^i_{jk}\nabla\phi^j)\psi^k\otimes\partial_{y^i}(\phi),
\end{eqnarray}
where $\Gamma^i_{jk}$ are the Christoffel symbols of the Levi-Civita connection of $(N,g)$. The Dirac operator along the map $\phi$ is defined by $\slashed D\psi:=e_\alpha\cdot\widetilde\nabla_{e_\alpha}\psi$.

An important factor that will enable us to utilize tools from complex analysis is that the usual Dirac operator $\p$ on a surface can be seen as the Cauchy-Riemann operator. Consider $\mathbb{R}^2$ with the Euclidean metric $dx^2+dy^2$. Let $e_1=\frac{\partial}{\partial x}$ and $e_2=\frac{\partial}{\partial y}$ be the standard orthonormal frame. A spinor field is simply a map $\psi:\mathbb{R}^2\rightarrow\Delta_2=\mathbb{C}^2$, and the action of $e_1$ and $e_2$  on spinors can be identified with multiplication with matrices
\[
e_1=\begin{pmatrix}0 & 1 \\ -1 & 0\end{pmatrix}, \quad e_2=\begin{pmatrix}0 &i \\ i & 0\end{pmatrix}.
\]
If $\psi:=\begin{pmatrix}\psi_1 \\ \psi_2\end{pmatrix}:\mathbb{R}^2\to\mathbb{C}^2$ is  a spinor field, then the Dirac operator is
\begin{eqnarray}\label{z1}
\p\psi=\begin{pmatrix}0 & 1 \\ -1 & 0\end{pmatrix}\begin{pmatrix}\frac{\partial \psi_1}{\partial x} \\
\frac{\partial \psi_2}{\partial x}\end{pmatrix}+
\begin{pmatrix}0 &i \\ i & 0\end{pmatrix}\begin{pmatrix}\frac{\partial \psi_1}{\partial y} \\
\frac{\partial \psi_2}{\partial y}\end{pmatrix}=2\begin{pmatrix}\frac{\partial \psi_2}{\partial \overline{z}} \\
-\frac{\partial \psi_1}{\partial z}\end{pmatrix},
\end{eqnarray}
where
\[
\frac{\partial}{\partial z}=\frac{1}{2}(\frac{\partial}{\partial x}-i\frac{\partial}{\partial y}),
\quad \frac{\partial}{\partial \overline{z}}=\frac{1}{2}(\frac{\partial}{\partial x}+i\frac{\partial}{\partial y}).
\]
For more details on spin geometry and Dirac operators, one can refer to \cite{lawson1989spin,Bourguignon,Friedrich}.

We consider the following functional
\begin{eqnarray*}
L(\phi,\psi)&=&\jf{M}{\left(|d\phi|^2+\langle\psi,\slashed D\psi\rangle_{\Sigma M\otimes\phi^{*}TN}\right)}dvol\\
&=&
\jf{M}{\left(g_{ij}(\phi)h^{\alpha\beta}\frac{\partial\phi^i}{\partial x^\alpha}\frac{\partial\phi^j}{\partial x^\beta}+g_{ij}(\phi)\langle\psi^i,\slashed D\psi^j\rangle_{\Sigma M}\right)}dvol.
\end{eqnarray*}

The functional $L(\phi,\psi)$ is conformally invariant (see \cite{chen2006dirac}). That is, for any conformal diffeomorphism $f:M\rightarrow M$, setting
\[
\widetilde{\phi}=\phi\circ f \qquad and \qquad \widetilde{\psi}=\lambda^{-1/2}\psi\circ f,
\]
here $\lambda$ is the conformal factor of the conformal map $f$, $i.e.$ $f^*h=\lambda^2h$. Then $L(\widetilde{\phi},\widetilde{\psi})=L(\phi,\psi)$. Critical points $(\phi,\psi)$ of $L$ are called Dirac-harmonic maps from $M$ to $N$.

The Euler-Lagrange equations of the functional $L$ are
\begin{eqnarray}
\left(\Delta\phi^i+\Gamma^i_{jk}h^{\alpha\beta}\phi^j_\alpha\phi^k_\beta\right)\frac{\partial}{\partial y^i}(\phi(x))&=&R(\phi,\psi),\\
\slashed{D}\psi&=&0,
\end{eqnarray}
where $R(\phi,\psi)$ is defined by
\[
R(\phi,\psi)=\frac{1}{2}R^m_{lij}(\phi(x))\langle\psi^i,\nabla\phi^l\cdot\psi^j\rangle\frac{\partial}{\partial y^m}(\phi(x)).
\]
Here $R^m_{lij}$ stands for the Riemann curvature tensor of the target manifold $(N,g)$. One can refer to \cite{chen2005regularity, chen2006dirac}.

Using Nash's embedding theorem, we embed $N$ isometrically into some Euclidean space $\mathbb{R}^K$. Then, the critical points $(\phi,\psi)$ satisfy the Euler-Lagrange equations
\begin{eqnarray}
-\Delta \phi&=&A(\phi )(d\phi ,d\phi )-Re(P(\mathcal{A}(d\phi
(e_{\alpha }),e_{\alpha }\cdot \psi );\psi)), \label{dh5}
\\
\p\psi&=&\mathcal{A(}d\phi (e_{\alpha }),e_{\alpha }\cdot \psi
\mathcal{)}, \label{dh6}
\end{eqnarray}
where $\p$ is the usual Dirac operator, $A$ is the second fundamental form of $N$ in $\mathbb{R}^K$, and
\begin{align*}
\mathcal{A}(d\phi(e_\alpha),e_\alpha\cdot\psi)&:=(\nabla\phi^i\cdot\psi^j)\otimes A(\partial_{y^i},\partial_{y^j}),\\
Re(P(\mathcal{A}(d\phi
(e_{\alpha }),e_{\alpha }\cdot \psi );\psi))&:=P(A(\partial_{y^l},\partial_{y^j});\partial_{y^i})Re(\langle\psi^i,d\phi^l\cdot\psi^j\rangle)\in\mathbb{R}^K.
\end{align*}
Here $P(\xi;\cdot)$ denotes the shape operator satisfying $\langle P(\xi;X),Y\rangle=\langle A(X,Y),\xi\rangle$ for any $X,Y\in\Gamma(TN)$ and $Re(z)$ denotes the real part of $z\in \mathbb{C}$. We refer to \cite{chen2005regularity, chen2006dirac, Zhu, CJWZ2013, sharpzhu, jost-Liu-Zhu} for more details.

\

Before we state our main results, let us recall a definition of approximate Dirac-harmonic map in \cite{jost-Liu-Zhu-04}. Denote
\[
W^{2,2}(M,N):=\left\{\ \phi\in W^{2,2}(M,\mathbb{R}^K)\ with \ \phi(x)\in N\ for\ a.e.\ x\in M \ \right\},
\]
\begin{align*}
W^{1,4/3}(M,\Sigma M\otimes \phi^{*}TN):=\big\{\ &\psi\in W^{1,4/3}(M,\Sigma M\otimes \mathbb{R}^K)\ with \ \psi(x)\in \Sigma M\otimes \phi^{*}TN\\ &for\ a.e.\ x\in M \ \big\}.
\end{align*}

A pair of fields $(\phi,\psi)\in W^{2,2}(M,N)\times W^{1,\frac{4}{3}}(M,\Sigma M\times \phi^{*}TN)$ is called an approximate Dirac-harmonic map from $M$ to $N$ with boundary data $(\varphi(x),\chi(x))$, if there exists a pair of fields $(\tau(\phi,\psi), h(\phi,\psi))\in L^1(M)$ such that
\begin{eqnarray}
\tau(\phi,\psi)&=&\Delta\phi+
A(d\phi,d\phi)-Re\left( P(\mathcal{A}(d\phi(e_\alpha),e_\alpha\cdot\psi);\psi)\right),\label{dh1}\\
h(\phi,\psi)&=&\slashed{\partial}\psi-
\mathcal{A}(d\phi(e_\alpha),e_\alpha\cdot\psi),\label{dh2}
\end{eqnarray}
with the boundary data
\begin{eqnarray}\label{BOUND-DATA}
\begin{cases}
\phi(x)=\varphi(x),\quad &on\quad \partial M;\\
\mathcal{B}\psi(x)=\mathcal{B}\chi(x),\quad &on\quad \partial M,
\end{cases}
\end{eqnarray}
where $\mathcal{B}=\mathcal{B}^{\pm}$ is the chiral boundary operator defined by
\begin{align}\label{defi:-B}
\mathcal{B}^{\pm}: L^2(\partial M,\Sigma M\otimes \phi^{*}TN|_{\partial M}) & \rightarrow  L^2(\partial M,\Sigma M\otimes \phi^{*}TN|_{\partial M})   \\
\psi &\mapsto  \frac{1}{2} \left(Id \pm \overrightarrow{n}\cdot G \right)\cdot\psi,
\end{align}
where $\overrightarrow{n}$ is the outward unit normal vector field on $\partial M$, $G=i e_1\cdot e_2$ is the chiral operator defined using a local orthonormal
frame $\{e_\alpha\}_{\alpha=1}^{2}$ on $TM$ and satisfying:
\begin{align}\label{defi:-B-1}
G^2=Id,\quad G^*=G,\quad \nabla G=0,\quad G\cdot X=-X\cdot G,
\end{align}
for any $X\in\Gamma(TM)$. See e.g. \cite{CJWZ2013, CJSZ2014} for the notion of chiral boundary condition.

Therefore, $(\phi,\psi)$ is a Dirac-harmonic map if and only if $\tau(\phi,\psi)=h(\phi,\psi)=0$.

\

Dirac-harmonic maps were  introduced  in \cite{chen2005regularity, chen2006dirac}. They are  motivated by a model from quantum field theory, the supersymmetric sigma  model \cite{deligne1999quantum}. This subject generalizes the theory of harmonic maps and harmonic spinors. Similarly to the case of two dimensional harmonic maps, the conformal invariance of the energy functional $L$ leads to non-compactness of Dirac-harmonic maps in dimension 2 and hence one needs to study their blow-up theory, as in \cite{chen2005regularity,zhao2007energy, zhu1, Liulei}. For the blow-up theory of a more general model, whose critical points are called Dirac-harmonic maps with curvature terms, see \cite{jost-Liu-Zhu}. For approximate harmonic maps in dimension two, one can refer to e.g. \cite{SU, Jost1991, parker1996bubble, qing, DingWeiyueandTiangang,  qing1997bubbling, Lin-Wang, Lin-Wang2, LiJiayuandZhuXiangrong, Wang-Wei-Zhang,Wang} for the interior blow-up case and \cite{jost-Liu-Zhu-03,jost-Liu-Zhu-05, Huang-Wang} for the boundary blow-up cases under various boundary constraints.

In order to study the blow-up behavior of the Dirac-harmonic map flow from surfaces with boundary considered in \cite{CJSZ2014,jost-Liu-Zhu-02}, we introduced the notion of approximate Dirac-harmonic maps in \cite{jost-Liu-Zhu-04} and proved the energy identity and no neck result in the interior blow-up case for a sequence of such maps. In general, this sequence might blow up at a boundary point. In this paper, we shall consider the case that the sequence blows up at the boundary and hence complete the blow-up picture of the Dirac-harmonic map flow.

Denote the energy of $\phi$ on $\Omega\subset M$ by $$E(\phi;\Omega)=\int_\Omega|\nabla\phi|^2dM,$$
the energy of $\psi$ on $\Omega\subset M$ by $$E(\psi;\Omega)=\int_\Omega|\psi|^4dM,$$
and the energy of the pair $(\phi,\psi)$ on $\Omega\subset M$ by $$E(\phi,\psi;\Omega)=\int_\Omega(|\nabla\phi|^2+|\psi|^4)dM.$$
We shall often omit the domain $M$ from the notation and simply write $E(\phi)=E(\phi;M)$, $E(\psi)=E(\psi;M)$ and $E(\phi,\psi)=E(\phi,\psi;M)$.

\
Based on the interior blow-up results for approximate Dirac-harmonic maps studied in \cite{jost-Liu-Zhu-04}, we state our first main result in this paper concerning the boundary blow-up case:

\begin{thm}\label{thm:main-01}
Consider a sequence of approximate Dirac-harmonic maps $(\phi_n,\psi_n)\in C^{2}(M,N)\times C^{1}(M,\Sigma M\otimes \phi^{*}TN)$ from a compact Riemann surface $M$ with smooth boundary $\partial M$ to a compact Riemannian manifold $N$ satisfying
\[
E(\phi_n,\psi_n)+\|\tau(\phi_n,\psi_n)\|_{L^2}+\|h(\phi_n,\psi_n)\|_{L^4}\leq \Lambda,
\]
and with boundary data
\[
\phi_n|_{\partial M}=\varphi,\ \mathcal{B}\psi_n|_{\partial M}=\mathcal{B}\chi,
\]
where $\varphi\in C^{2+\alpha}(\partial M,N)$, $\chi\in C^{1+\alpha}(\partial M,\Sigma M\otimes \phi^{*}TN)$ for some $0<\alpha<1$. We assume $(\phi_n,\psi_n)\rightharpoonup (\phi,\psi)$ weakly in $W^{1,2}(M,N)\times L^{4}(M,\Sigma M\otimes\phi^*TN)$. Define the blow-up set
 \begin{equation}
\mathcal{S}:=\cap_{r>0}\big\{x\in M|\liminf_{n\to\infty}\int_{D(x,r)}(|d\phi_n|^2+|\psi_n|^4)\geq\overline{\epsilon}\big\},
\end{equation}
where $\overline{\epsilon}>0$ is the constant as in \eqref{def:small-energy}.
Then $\mathcal{S}$ is a (possibly empty) finite set  $\{p_1,...,p_q,...,p_I\}$, where $1\leq q\leq I$, $\{p_1,...,p_q\}\in M\setminus \partial M$, $\{p_{q+1},...,p_I\}\in \partial M$. Moreover, a subsequence, still denoted by $\{(\phi_k,\psi_k)\}$,  converges weakly in $W^{2,2}_{loc}(M\setminus \mathcal{S})\times W^{1,2}_{loc}(M\setminus \mathcal{S})$ to $(\phi,\psi)$ and for each $i=1,...,I$, there is a finite set of Dirac-harmonic spheres $(\sigma_i^l,\xi_i^l):S^2\to N$, $l=1,...,L_i$, such that
\begin{eqnarray}
\lim_{k\to\infty}E(\phi_k)&=&E(\phi)+\sum_{i=1}^I\sum_{l=1}^{L_i}E(\sigma_i^l),\\
\lim_{k\to\infty}E(\psi_k)&=&E(\psi)+\sum_{i=1}^I\sum_{l=1}^{L_i}E(\xi_i^l),
\end{eqnarray}
and the image $\phi(M\setminus \partial M)\cup\bigcup_{i=1}^q\bigcup_{l=1}^{L_i}(\sigma^l_i(S^2))$ is a connected set.
\end{thm}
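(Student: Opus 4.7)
The plan is to adapt the standard bubble-extraction procedure to the setting of approximate Dirac-harmonic maps and to reduce the boundary blow-up to the interior case already treated in \cite{jost-Liu-Zhu-04} via a reflection argument that exploits the chiral boundary condition. The overall structure mirrors the classical Sacks-Uhlenbeck analysis, but the two technical novelties are the controlled error terms $(\tau,h)$ in the defining equations and the boundary behavior under the chiral condition.

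\emph{Finiteness of $\mathcal{S}$ and interior blow-up.} The $\overline{\epsilon}$-regularity estimate for approximate Dirac-harmonic maps (both interior and up-to-boundary, the latter using the smoothness of $(\varphi,\chi)$ together with the chiral condition \eqref{defi:-B}--\eqref{defi:-B-1}) yields uniform $W^{2,2}\times W^{1,2}$ bounds on any disk where $\int(|d\phi_n|^2+|\psi_n|^4)<\overline{\epsilon}$, controlled by $\|\tau(\phi_n,\psi_n)\|_{L^2}+\|h(\phi_n,\psi_n)\|_{L^4}\leq\Lambda$. A standard covering argument then forces $\mathcal{S}$ to contain at most $\Lambda/\overline{\epsilon}$ points, upgrading weak to strong convergence on $M\setminus\mathcal{S}$. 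At each interior point $p_j$, $j=1,\dots,q$, the bubble-tree decomposition and the energy identity for both $|\nabla\phi|^2$ and $|\psi|^4$ are exactly the content of the interior result in \cite{jost-Liu-Zhu-04}, which can be applied verbatim to produce the finite collection of Dirac-harmonic spheres $(\sigma_j^l,\xi_j^l)$.

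\emph{Boundary blow-up via chiral reflection.} At each boundary blow-up point $p_i\in\partial M$, $i=q+1,\dots,I$, choose isothermal coordinates identifying a neighborhood of $p_i$ with a half-disk $D^+$ and $\partial M$ with $\{y=0\}$. Since $\varphi\in C^{2+\alpha}$ and $\chi\in C^{1+\alpha}$, one can first subtract smooth extensions $(\Phi,X)$ of the boundary data: setting $\widehat\phi_n:=\phi_n-\Phi$ and $\widehat\psi_n:=\psi_n-X$, the pair $(\widehat\phi_n,\widehat\psi_n)$ satisfies homogeneous boundary data and equations \eqref{dh1}--\eqref{dh2} with new error fields whose $L^2$ and $L^4$ norms are bounded by $\Lambda$ plus the $C^{2+\alpha}, C^{1+\alpha}$ norms of the boundary data. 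The algebraic identities $G^2=Id$, $G^*=G$, $G\cdot X=-X\cdot G$ guarantee that the chiral reflection across $\{y=0\}$ maps solutions of the chiral boundary value problem to spinor fields on the full disk; thus the reflected pair $(\bl\phi_n,\bl\psi_n)$ on $D$ is an approximate Dirac-harmonic map whose error terms retain the $L^2\times L^4$ bound. The boundary blow-up point $p_i$ becomes an interior blow-up point of the doubled sequence, to which the interior result of \cite{jost-Liu-Zhu-04} applies. Bubbles detected after reflection split into two classes: off-axis bubbles appearing in symmetric pairs (counted once in the original domain) and axis-symmetric bubbles (counted once), so halving the doubled-domain energy identity yields the claimed identity at $p_i$.

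\emph{Neck analysis, connectedness, and main obstacle.} The no-neck property on each dyadic annulus rests on a Pohozaev-type identity for approximate Dirac-harmonic maps combined with the $L^{2,1}$--$L^{2,\infty}$ duality argument; the error terms contribute only at subcritical order, which is absorbed by the $L^2$/$L^4$ bounds on $\tau,h$, and after reflection the boundary integrals in Pohozaev vanish by the smoothness of the extension. The connectedness of $\phi(M\setminus\partial M)\cup\bigcup_{i=1}^q\bigcup_{l=1}^{L_i}\sigma^l_i(S^2)$ then follows: only interior bubbles are listed, and the no-neck statement at interior points implies that the oscillation of $\phi_n$ on the neck regions tends to zero, so each interior bubble is glued to the base map through a single image point. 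The main obstacle is the chiral reflection step: one must verify carefully that after subtracting a smooth extension of the boundary data and performing the chiral reflection, the reflected pair satisfies the coupled system up to an error with \emph{the same} $L^2\times L^4$ bounds, and then show that the bubbles produced by the reflected (interior) sequence correspond to genuine Dirac-harmonic spheres into $N$, handling in particular the delicate case of a bubble whose center approaches $\partial M$ at the blow-up scale.
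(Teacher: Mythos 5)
Your reflection-of-the-coupled-system strategy is genuinely different from the paper's argument, but as stated it has a real gap. The central difficulty is that after subtracting a smooth extension $\Phi$ of $\varphi$, the odd reflection of $\phi_n-\Phi$ across $\partial^0 D^+$ takes values in $\mathbb{R}^K$ and does \emph{not} land in $N$; hence the reflected pair $(\bl{\phi_n},\bl{\psi_n})$ is not an approximate Dirac-harmonic map into $N$ on the full disk, and the interior energy identity of \cite{jost-Liu-Zhu-04} cannot be applied to it verbatim. The reflected PDE also acquires terms with discontinuous coefficients across the axis, so ``error terms retain the $L^2\times L^4$ bound'' is not automatic. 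Your last paragraph flags this as ``the main obstacle,'' but no fix is offered, and this is precisely where the argument would break.

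The paper takes a route that avoids reflecting the coupled system. First, it rules out boundary bubbles outright: if the rescaling centers $x_n$ satisfy $d(x_n,\partial^0 D^+)/r_n\to a<\infty$, the rescaled limit is a Dirac-harmonic map on a half-plane with constant Dirichlet data and vanishing chiral data, hence trivial by the half-plane Liouville theorem (Theorem~\ref{thm:03}); this forces $d_n/r_n\to\infty$, so every bubble is an interior bubble in a genuinely shrinking interior region, where \cite{jost-Liu-Zhu-04} applies directly. You do not invoke this Liouville theorem, yet it is exactly what makes the ``halving the doubled-domain identity'' step unnecessary. Second, for the neck, the spinor part is handled by a finite dyadic decomposition and Dirac elliptic estimates, while the map part uses only a \emph{scalar} odd reflection of $\Phi_n=\phi_n-\varphi$, fed into an integration-by-parts argument against its angular average $\widehat{\Phi_n}^*$; the key input is a new boundary Pohozaev identity (Lemma~\ref{lem:poho-bound}) and the derived estimate (Corollary~\ref{cor:poho-bound}). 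Contrary to your claim that ``after reflection the boundary integrals in Pohozaev vanish,'' the boundary term $\int_{\partial^0 D^+_t}\langle\psi,\frac{\partial}{\partial x^2}\cdot r\psi_r\rangle$ does \emph{not} vanish for nonzero $\chi$; it produces an $O(\sqrt{t})$ contribution, controlled only after a delicate averaging of the Pohozaev identity over $t\in[t,2t]$ and the use of Facts~1--2 on the chiral condition. To salvage your outline you would need, at minimum, (i) a precise statement of what PDE the reflected pair solves and why it remains coercive, and (ii) a substitute for the half-plane Liouville theorem to classify bubbles at the boundary scale; without these, the approach does not close.
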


\begin{rem}
In Theorem \ref{thm:main-01}, for those Dirac-harmonic spheres splitting off at the interior blow-up points, $i.e.$ $(\sigma_i^l,\xi_i^l):S^2\to N$, $i=1,...,q$; $l=1,...,L_i$, we know that the image of the map parts $\sigma_i^l$, $i=1,...,q$; $l=1,...,L_i$, are connected to the map part $\phi$ of the base field $(\phi,\psi)$ in the target manifold; this is proved in \cite{jost-Liu-Zhu-04},  the refined bubble tree can be constructed by applying similar arguments as in the harmonic map case given by \cite[Section 3]{Chen-Li} and \cite[Appendix]{Li-Wang2}. However, for those Dirac-harmonic spheres splitting off at the boundary blow-up points, $i.e.$ $(\sigma_i^l,\xi_i^l):S^2\to N$, $i=q+1,...,I$; $l=1,...,L_i$, it is not clear whether the images of the map parts $\sigma_i^l$, $i=q+1,...,I$; $l=1,...,L_i$ have the same property.
\end{rem}

To prove the energy quantization result near the boundary in Theorem \ref{thm:main-01}, we shall follow the general blow-up scheme developed for harmonic map type problems, however,  the proofs in this case are subtle and there are new difficulties arising when carrying out the neck analysis. Firstly, the method of the three circle type theorem used in the interior case in \cite{jost-Liu-Zhu-04} can not be applied to the boundary case and we need to apply certain integration argument to show the no neck energy property. Secondly, we need to establish a new Pohozaev type identity for approximate Dirac-harmonic maps  from surfaces with boundary (see Lemma \ref{lem:poho-bound}) which requires some algebraic property for the spinors,  see \eqref{algebraic-property}. Moreover, we need to derive a new Pohozaev type estimate (see Corollary \ref{cor:poho-bound}) by carrying out some exponential decay estimates, which are crucial in the proof of the above theorem. Finally, we would like to remark that the bubbling analysis at the boundary is more complicated than in the interior case and here we follow the scheme developed for approximate harmonic maps in \cite{jost-Liu-Zhu-03, jost-Liu-Zhu-05}.

\

With the help of Theorem \ref{thm:main-01}, we can now study the asymptotic behavior at infinite time for the Dirac-harmonic map flow in dimension 2.

The notion of Dirac-harmonic map flow was introduced in \cite{CJSZ2014}. In this flow,  one seeks a pair of fields $(\phi,\psi):M\times [0,\infty)\to N\times (\Sigma M\otimes\phi^{*}TN)$ that solves
\begin{eqnarray}\label{DHF1}
\begin{cases}
\partial_t \phi=\tau(\phi)-Re(P(\mathcal{A}(d\phi
(e_{\alpha }),e_{\alpha }\cdot \psi );\psi)),\quad &in\quad M\times (0,\infty);\\
\p\psi=\mathcal{A}(d\phi (e_{\alpha }),e_{\alpha }\cdot \psi
),\quad &in\quad M\times (0,\infty).
\end{cases}
\end{eqnarray}
with the following boundary-initial data:
\begin{eqnarray}\label{DHF1-BOUND-INITIAL}
\begin{cases}
\phi(x,t)=\varphi(x),\quad &on\quad \partial M\times [0,\infty);\\
\phi(x,0)=\phi_0(x),\quad &in\quad M;\\
\mathcal{B}\psi(x,t)=\mathcal{B}\chi(x),\quad &on\quad \partial M\times [0,\infty);\\
\phi_0(x)=\varphi(x),\quad &on\quad \partial M,
\end{cases}
\end{eqnarray}
where $\tau(\phi):=\Delta\phi+A(\phi)(\nabla\phi,\nabla\phi)$ is the tension field of $\phi$, $M$ is a compact spin Riemannian manifold with smooth boundary $\partial M$ and of dimension $dim\ M \geq 2$ and $\phi_0\in W^{1,2}(M,N)$, $\varphi\in C^{2+\alpha}(\partial M,N)$, $\chi\in C^{1+\alpha}(\partial M,\Sigma M\otimes \varphi^{*}TN)$ are given data. The short-time existence for the above flow \eqref{DHF1} \eqref{DHF1-BOUND-INITIAL}  was proved in \cite{CJSZ2014}.

When $M$ is a surface, it was shown in \cite{jost-Liu-Zhu-02} that there exists a unique global weak solution defined in $[0,\infty)\times M$ to \eqref{DHF1} with initial-boundary data \eqref{DHF1-BOUND-INITIAL} under some smallness assumption for $\|\phi_0\|_{H^{1}}+\|\mathcal{B}\chi\|_{L^{2}}$, which has at most finitely many singular times and enjoys the following property:
\begin{align}\label{z3}
E(\phi(t),\psi(t);M)+\int_{M^t}|\partial_t \phi|^2dxdt\leq C(M,E(\phi_0),\|\mathcal{B}\chi\|_{L^2(\partial M)}),  \quad \forall \
0\leq t<\infty.
\end{align}

It follows from \eqref{z3} that there exists a sequence $t_n\uparrow\infty$ such that $$(\phi_n,\psi_n):=(\phi(\cdot,t_n),\psi(\cdot,t_n))\in C^{2+\alpha}(M,N)\times C^{1+\alpha}(M,\Sigma M\times \phi^{*}TN)$$
is a sequence of approximate Dirac-harmonic maps with boundary-data $(\varphi,\chi)$ and satisfying $$h(\phi_n,\psi_n)=0$$ and $$\tau(\phi_n,\psi_n):=\partial_t\phi(\cdot,t_n)\ \ \text{with}    \ \ \|\tau(\phi_n,\psi_n)\|_{L^2}\to 0.$$

When such a flow blows up at infinite time and at interior points, it was proved in \cite{jost-Liu-Zhu-04} that an energy identity and no neck property hold during the blow-up process. In this paper, as an immediate corollary of Theorem \ref{thm:main-01} and as a complement of the blow-up picture at infinite time of such a flow given in \cite{jost-Liu-Zhu-04}, we obtain

\begin{thm}\label{thm:main-02}
Let $M$ be a compact spin Riemann surface with smooth boundary $\partial M$. Let $\phi_0\in H^1(M,N)$, $\varphi\in C^{2+\alpha}(\partial M,N)$, $\chi\in C^{1+\alpha}(\partial M,\Sigma M\otimes \varphi^{*}TN)$. Let $(\phi,\psi):M\times [0,\infty)\to N\times (\Sigma M\otimes\phi^{*}TN)$ be a global weak solution of \eqref{DHF1} and \eqref{DHF1-BOUND-INITIAL}, which has finitely many singular times and satisfies \eqref{z3}. Then there exist $t_n\uparrow\infty$, a Dirac-harmonic map $(\phi_\infty,\psi_\infty)\in C^{2+\alpha}(M,N)\times C^{1+\alpha}(M,\Sigma M\otimes \phi_\infty^{*}TN)$ with boundary data $\phi_\infty|_{\partial M}=\varphi$ and $\mathcal{B}\psi_\infty|_{\partial M}=\mathcal{B}\chi$, nonnegative integer $I$ and a possibly empty set with at most finitely many points $\{p_1,...,p_q,...,p_I\}\subset M$, where $1\leq q\leq I$, $\{p_1,...,p_q\}\in M\setminus \partial M$, $\{p_{q+1},...,p_I\}\in \partial M$ such that
\begin{itemize}
\item[(1)]  $(\phi_n,\psi_n):=(\phi(\cdot,t_n),\psi(\cdot,t_n))\rightharpoonup (\phi_\infty,\psi_\infty)\ in\ W^{1,2}(M,N)\times L^{4}(M,\Sigma M\times \phi_\infty^{*}TN)$;\\
\item[(2)]  $(\phi_n,\psi_n)\to (\phi_\infty,\psi_\infty)\ in\ W_{loc}^{1,2}(M\setminus \{p_1,...,p_I\})\times L_{loc}^{4}(M\setminus \{p_1,...,p_I\})$;\\
\item[(3)] For\ $1\leq i\leq I$,\mbox{ there exist a positive integer} $L_i$ and $L_i$ nontrivial Dirac-harmonic spheres $(\sigma_i^l,\xi_i^l):S^2\to N$, $i=1,...,I$; $l=1,...,L_i$ such that
\begin{eqnarray}
\lim_{n\to\infty}E(\phi_n)&=&E(\phi_\infty)+\sum_{i=1}^I\sum_{l=1}^{L_i}E(\sigma_i^l),\\
\lim_{n\to\infty}E(\psi_n)&=&E(\psi_\infty)+\sum_{i=1}^I\sum_{l=1}^{L_i}E(\xi_i^l).
\end{eqnarray}
and the image $\phi_\infty(M\setminus \partial M)\cup\bigcup_{i=1}^q\bigcup_{l=1}^{L_i}(\sigma^l_i(S^2))$ is a connected set.
\end{itemize}
\end{thm}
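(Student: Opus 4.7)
The strategy is to read Theorem \ref{thm:main-02} as a direct corollary of the boundary blow-up Theorem \ref{thm:main-01}, applied to a time sequence $t_n\uparrow\infty$ extracted from the flow. First I would produce that sequence. Because the flow has only finitely many singular times, one may fix $T_0$ larger than all of them and restrict attention to $t\ge T_0$, on which the parabolic regularity theory for \eqref{DHF1}--\eqref{DHF1-BOUND-INITIAL} established in \cite{CJSZ2014,jost-Liu-Zhu-02} gives smoothness of $(\phi(\cdot,t),\psi(\cdot,t))$. The energy inequality \eqref{z3} gives $\int_{T_0}^\infty\!\!\int_M |\partial_t\phi|^2\,dx\,dt<\infty$, so by the standard averaging argument there exists $t_n\uparrow\infty$ with $\|\partial_t\phi(\cdot,t_n)\|_{L^2(M)}\to 0$. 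Set $(\phi_n,\psi_n):=(\phi(\cdot,t_n),\psi(\cdot,t_n))$, which lies in $C^2(M,N)\times C^1(M,\Sigma M\otimes\phi_n^*TN)$.

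The flow equations \eqref{DHF1} give immediately $\tau(\phi_n,\psi_n)=\partial_t\phi(\cdot,t_n)$ and $h(\phi_n,\psi_n)\equiv 0$, so $\|\tau(\phi_n,\psi_n)\|_{L^2}\to 0$ and $\|h(\phi_n,\psi_n)\|_{L^4}=0$. The boundary data $\phi_n|_{\partial M}=\varphi$ and $\mathcal{B}\psi_n|_{\partial M}=\mathcal{B}\chi$ have the required $C^{2+\alpha}\times C^{1+\alpha}$ regularity, and \eqref{z3} yields a uniform bound on $E(\phi_n,\psi_n)$. Hence the hypotheses of Theorem \ref{thm:main-01} are met, and an application of that theorem (up to a subsequence) delivers: the finite blow-up set $\{p_1,\dots,p_I\}$ with its decomposition into interior and boundary points; a weak $W^{1,2}\times L^4$ limit $(\phi_\infty,\psi_\infty)$; local strong convergence on $M\setminus\{p_1,\dots,p_I\}$ (the weak $W^{2,2}_{\mathrm{loc}}\times W^{1,2}_{\mathrm{loc}}$ convergence stated there upgrades to $W^{1,2}_{\mathrm{loc}}\times L^4_{\mathrm{loc}}$ strong convergence by Rellich); the Dirac-harmonic sphere bubbles $(\sigma_i^l,\xi_i^l)$ with the energy identities of item (3); and the connectedness of the image via the interior bubbles.

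It remains to upgrade $(\phi_\infty,\psi_\infty)$ to a smooth Dirac-harmonic map with the prescribed boundary conditions. Passing to the limit in the weak form of \eqref{dh1}--\eqref{dh2}, using the local strong convergence together with $\tau(\phi_n,\psi_n)\to 0$ in $L^2$ and $h(\phi_n,\psi_n)\equiv 0$, shows that $(\phi_\infty,\psi_\infty)$ is a weak Dirac-harmonic map on $M\setminus\mathcal{S}$; the removable singularity property for weak Dirac-harmonic maps with finite energy extends it across the finitely many blow-up points. The boundary traces are preserved because $\phi_n\equiv\varphi$ and $\mathcal{B}\psi_n\equiv\mathcal{B}\chi$ on $\partial M$ for every $n$, and the interior and chiral-boundary regularity theory for Dirac-harmonic maps (see \cite{chen2005regularity,chen2006dirac,CJWZ2013,sharpzhu}) then gives the Hölder regularity $C^{2+\alpha}\times C^{1+\alpha}$ asserted in the theorem.

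The only non-routine input is Theorem \ref{thm:main-01} itself, whose proof carries all the weight (the new Pohozaev-type identity and estimate, together with the boundary neck analysis). The potential subtlety in the present deduction is the selection of $t_n$: one must simultaneously avoid the finitely many singular times, retain smoothness of $(\phi_n,\psi_n)$, and realize $\|\partial_t\phi(\cdot,t_n)\|_{L^2}\to 0$. All three are immediate from the finiteness of the singular set and the time-integrated bound $\int|\partial_t\phi|^2<\infty$. Once those ingredients are in place, the proof of Theorem \ref{thm:main-02} amounts to assembling the conclusions of Theorem \ref{thm:main-01} together with passage to the limit in the approximate Dirac-harmonic system.
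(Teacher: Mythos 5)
Your proposal matches the paper's own treatment: the paper presents Theorem \ref{thm:main-02} as ``an immediate corollary of Theorem \ref{thm:main-01},'' after observing exactly as you do that \eqref{z3} furnishes $t_n\uparrow\infty$ with $h(\phi_n,\psi_n)\equiv 0$ and $\tau(\phi_n,\psi_n)=\partial_t\phi(\cdot,t_n)\to 0$ in $L^2$, so that $(\phi_n,\psi_n)$ is a sequence of approximate Dirac-harmonic maps to which Theorem \ref{thm:main-01} applies. Your additional remarks on extracting the weakly convergent subsequence, upgrading to strong local convergence via Rellich, and identifying the limit as a smooth Dirac-harmonic map via removable singularity and boundary regularity are correct fillings-in of the details the paper leaves implicit.
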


\

This paper is organized as follows. In Section 2, we  extend some basic lemmas to the boundary case, such as small energy regularity, Pohozaev's identity and removable singularity. Then, we recall some known results which will be used in this paper. In Section 3, we prove our main Theorem \ref{thm:main-01}.

\

\noindent\textbf{Notations:} We denote $\mathbb{R}^2_+=\{(x,y)\in\mathbb{R}^2|y\geq 0\}$, $D_r(x)=\{y\in\mathbb{R}^2||y-x|\leq r\}$, $D_r^+(x)=D_r(x)\cap\mathbb{R}^2_+$, $\partial^+D_r^+(x)=\partial D_r(x)\cap\mathbb{R}^2_+$, $\partial^0D_r^+(x)= D_r(x)\cap\partial \mathbb{R}^2_+$.

For simplicity, we also denote $D_r(0)$, $D^+_r(0)$, $D_1(0)$, $D_1^+(0)$ as $D_r$, $D^+_r$, $D$, $D^+$ respectively.

\

\section{Some basic lemmas}

\

In this section, we will prove some basic lemmas and recall some known results which will be used in this paper.

By standard elliptic theory, there exists a unique solution $u\in C^{2+\alpha}(M,\mathbb{R}^K)$ of
\begin{align*}
\begin{cases}
  \Delta u &= \quad 0,  \quad in\ M, \\
  u &=  \quad \varphi,  \quad on \ \partial M,
  \end{cases}
\end{align*}
satisfying $$\|u\|_{C^{2+\alpha}(M)}\leq C(\alpha,M)\|\varphi\|_{C^{2+\alpha}(\partial M)}.$$
For simplicity of notation,  in the sequel, we will also denote this solution as $\varphi$.

\

Firstly, we prove a small energy regularity theorem for the boundary case. For similar results for approximate harmonic maps, one can refer to the main estimate 3.2 in \cite{SU} and Lemma 2.1 in \cite{DingWeiyueandTiangang} for the interior case and one can also refer to Lemma 4.1 in \cite{jost-Liu-Zhu-03}, Lemma 2.4 in \cite{jost-Liu-Zhu-05} for various boundary cases.
\begin{thm}\label{smthm-bound}
There is a small constant $\epsilon_0>0$ depending only on $p,q$ and $N$, such that if $(\phi,\psi)\in W^{2,p}(D^+,N)\times W^{1,q}(D^+,\Sigma D^+\otimes \phi^{*}TN)$ is an approximate Dirac-harmonic map from the upper unit disc $D^+ \subset\mathbb{R}^2$ to a compact Riemannian manifold $(N,g)\subset \mathbb{R}^K$ with $\tau(\phi,\psi)\in L^p$ and $h(\phi,\psi)\in L^q$ for some $1< p\leq 2$ and some $\frac{4}{3}< q\leq 2$, and with boundary data \eqref{BOUND-DATA},  satisfying
\begin{eqnarray}
E(\phi,\psi;D^+)=\jf{D^+}{(|d\phi|^2+|\psi|^4)}dx<(\epsilon_0)^2,
\end{eqnarray}
then
\begin{eqnarray*}
\|\phi-\overline{\varphi}\|_{W^{2,p}(D^+_\frac{1}{2})}&\leq& C(\|d\phi\|_{L^p(D^+)}+\|\psi\|_{L^{2p}(D^+)}+\||\tau|\|_{L^{p}(D^+)} +\|\nabla\varphi\|_{W^{1,p}(D^+)})
,\\
 \|\psi\|_{W^{1,q}(D^+_\frac{1}{2})}&\leq& C(\|\psi\|_{L^q(D^+)}+\|h\|_{L^q(D^+)}+\|\mathcal{B}\chi\|_{W^{1-1/q,q}(\partial^0 D^+)}),
\end{eqnarray*}
where $\overline{\varphi}:=\int_{\partial^0 D^+_{1/2}}\varphi\in \mathbb{R}^K $ and $C>0$ is a constant depending only on $p,\ q,\ N,\|\varphi\|_{C^2},\|\chi\|_{C^1}$.

Moreover, by the Sobolev embedding $W^{2,p}(\mathbb{R}^2)\subset C^0(\mathbb{R}^2)$, we have
\begin{align}
\|\phi\|_{Osc(D^+_{1/2})}=\sup_{x,y\in D^+_{1/2}}|\phi(x)-\phi(y)|\leq C(\|\nabla \phi\|_{L^2(D^+)}+\|\tau(u)\|_{L^p(D^+)}+\|\nabla\varphi\|_{W^{1,p}(D^+)}).
\end{align}
\end{thm}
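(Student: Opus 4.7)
The plan is to derive coupled local elliptic estimates for $\phi$ (Laplace with Dirichlet data) and $\psi$ (Dirac with chiral boundary condition) on shrinking concentric half-discs, and to absorb the quadratic/cubic coupling terms via the small-energy hypothesis.

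\textbf{Step 1 (Spinor estimate).} Treating \eqref{dh2} as an inhomogeneous Dirac equation $\p\psi=\mathcal A(d\phi(e_\alpha),e_\alpha\cdot\psi)+h(\phi,\psi)$ with boundary condition $\mathcal B(\psi-\chi)=0$ on $\partial^0 D^+$, and using that $\mathcal B=\tfrac12(Id\pm\vec n\cdot G)$ is an elliptic (Lopatinski--Shapiro) local boundary condition for $\p$, I would apply the $L^q$ a priori estimate
\[
\|\psi\|_{W^{1,q}(D^+_{3/4})}\leq C\bigl(\|\p\psi\|_{L^q(D^+)}+\|\psi\|_{L^q(D^+)}+\|\mathcal B\chi\|_{W^{1-1/q,q}(\partial^0 D^+)}\bigr).
\]
The coupling term is then controlled via H\"older and the 2D Sobolev embedding $W^{1,q}\hookrightarrow L^{2q/(2-q)}$ (Gagliardo--Nirenberg at $q=2$):
\[
\|\,|d\phi|\,|\psi|\,\|_{L^q}\leq \|d\phi\|_{L^2(D^+)}\|\psi\|_{L^{2q/(2-q)}(D^+)}\leq C\eps_0\|\psi\|_{W^{1,q}(D^+)},
\]
which can be absorbed once $\eps_0$ is sufficiently small.

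\textbf{Step 2 (Map estimate).} Setting $v:=\phi-\varphi$, with $\varphi$ identified with its harmonic extension to $D^+$, one has $v|_{\partial^0 D^+}=0$ and
\[
-\Delta v=A(\phi)(d\phi,d\phi)-\mathrm{Re}\bigl(P(\mathcal A(d\phi(e_\alpha),e_\alpha\cdot\psi);\psi)\bigr)-\tau(\phi,\psi).
\]
Local $W^{2,p}$ Calder\'on--Zygmund estimates for the Dirichlet Laplacian on $D^+$ give
\[
\|v\|_{W^{2,p}(D^+_{1/2})}\leq C\bigl(\|\Delta v\|_{L^p(D^+_{3/4})}+\|v\|_{L^p(D^+_{3/4})}\bigr).
\]
The quadratic term obeys $\|\,|d\phi|^2\,\|_{L^p}\leq\|d\phi\|_{L^2}\|d\phi\|_{L^{2p/(2-p)}}\leq C\eps_0\bigl(\|v\|_{W^{2,p}}+\|\varphi\|_{W^{2,p}}\bigr)$ by H\"older and the 2D Sobolev embedding $W^{2,p}\hookrightarrow W^{1,2p/(2-p)}$ (or Gagliardo--Nirenberg at $p=2$); the cubic term $\|\,|d\phi|\,|\psi|^2\,\|_{L^p}\leq\|\psi\|_{L^4}^2\|d\phi\|_{L^{2p/(2-p)}}$ is bounded analogously using the Step 1 estimate for $\psi$, producing an extra factor of $\eps_0^2$.

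\textbf{Step 3 (Coupling, absorption, oscillation).} Adding the Step 1 and Step 2 inequalities on nested half-discs via a standard cutoff iteration, every coupling term carries a factor $\|d\phi\|_{L^2(D^+)}+\|\psi\|_{L^4(D^+)}^2\leq 2\eps_0^2$; for $\eps_0$ small the terms involving $\|v\|_{W^{2,p}}$ and $\|\psi\|_{W^{1,q}}$ can be absorbed on the left. Replacing $\varphi$ by the constant $\overline\varphi$ (harmless in the PDE) and estimating the correction by $\|\nabla\varphi\|_{W^{1,p}}$ via Poincar\'e yields the stated bounds for $\phi-\overline\varphi$; the oscillation estimate then follows from $W^{2,p}(\mathbb R^2)\hookrightarrow C^0(\mathbb R^2)$ for $p>1$. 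The main obstacle is the $L^q$ theory for $\p$ with chiral boundary condition, which I would make explicit by reflecting $\psi$ across $\partial^0 D^+$ through the chiral operator $G$: since $G$ satisfies \eqref{defi:-B-1}, the reflected spinor on the full disc $D$ solves an inhomogeneous Dirac equation whose right-hand side is the reflected coupling term, with the chiral condition absorbed into the reflection; standard interior $W^{1,q}$ estimates then close the argument.
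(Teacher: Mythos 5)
Your overall scheme matches the paper's: localize with a cutoff, apply first-order elliptic estimates for $\p$ with chiral boundary condition and second-order estimates for the Dirichlet Laplacian, and absorb the coupling terms using the small-energy hypothesis. The reflection across $\partial^0 D^+$ via the chiral operator $G$ is a reasonable way to make the $L^q$ boundary Dirac estimate concrete (the paper simply invokes it as standard). However, there is a genuine gap at the endpoint exponents $p=2$ and $q=2$, which are included in the statement and are in fact the cases actually used later in the paper.

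Your absorption step hinges on the H\"older splits $\|\,|d\phi|\,|\psi|\,\|_{L^q}\leq\|d\phi\|_{L^2}\|\psi\|_{L^{2q/(2-q)}}$ and $\|\,|d\phi|\,|\psi|^2\,\|_{L^p}\leq\|\psi\|_{L^4}^2\|d\phi\|_{L^{2p/(2-p)}}$, followed by the embeddings $W^{1,q}\hookrightarrow L^{2q/(2-q)}$ and $W^{2,p}\hookrightarrow W^{1,2p/(2-p)}$. Both exponents $2q/(2-q)$ and $2p/(2-p)$ become $\infty$ at $q=2$ and $p=2$, where $W^{1,2}\not\hookrightarrow L^\infty$ and $W^{2,2}\not\hookrightarrow W^{1,\infty}$; the parenthetical appeal to Gagliardo--Nirenberg does fix the \emph{quadratic} term $\|\,|d\phi|^2\,\|_{L^2}\leq\|d\phi\|_{L^4}^2\leq C\|d\phi\|_{L^2}\|\phi\|_{W^{2,2}}$, but it does not close the argument for the \emph{cubic} term $|d\phi|\,|\psi|^2$ (nor, symmetrically, for $|d\phi|\,|\psi|$ in the spinor equation when $q=2$). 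Trying to interpolate $\|d\phi\|_{L^s}\lesssim\|d\phi\|_{L^2}^{2/s}\|\phi\|_{W^{2,2}}^{1-2/s}$ for the large exponent $s$ needed there leaves a nearly full power of $\|\phi\|_{W^{2,2}}$ multiplied by a quantity that is bounded but not small, so it cannot be absorbed. The paper instead runs a two-step bootstrap: first apply the established subcritical estimate with carefully chosen $p'=\frac{q}{2(q-1)}$ (resp.\ $q'=\frac{2p}{3p-2}$, or $p'=\frac43,\,q'=\frac85$ when $p=q=2$) to obtain the precise higher integrability $\nabla\phi\in L^{2q/(3q-4)}$ (resp.\ $\psi\in L^{p/(p-1)}$, or $\nabla\phi\in L^4$, $\psi\in L^8$) controlled by the data rather than by $\|\phi\|_{W^{2,2}}$ or $\|\psi\|_{W^{1,2}}$, and only then closes the $W^{2,2}$ and $W^{1,2}$ estimates. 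You should insert this intermediate bootstrap to make the endpoint cases rigorous.
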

\begin{proof}
Without loss of generality, we assume $\int_{\partial^0 D^+_{\frac{1}{2}}}\varphi =0$.

Choosing a cut-off function $\eta\in C_0^\infty(D^+)$ satisfying $0\leq\eta\leq1,\eta|_{D^+_{3/4}}\equiv1,|\nabla\eta|+|\nabla^2\eta|\leq C$, by standard theory of first order elliptic equation, for any $1<q<2$, we have
\begin{align*}
\|\eta\psi\|_{W^{1,q}(D^+)}&\leq C(\|\slashed{\partial}(\eta\psi)\|_{L^q(D^+)}+\|\mathcal{B}\psi\|_{W^{1-1/q,q}(\partial^0 D^+)})\\
&\leq
C(\|\nabla\eta\cdot\psi+\eta\slashed{\partial}\psi\|_{L^q(D^+)}
+\|\mathcal{B}\psi\|_{W^{1-1/q,q}(\partial^0 D^+)})\\
&\leq
C\left(\|\psi\|_{L^q(D^+)}+\||d\phi||\eta\psi|\|_{L^q(D^+)}+\|h\|_{L^q(D^+)}
+\|\mathcal{B}\psi\|_{W^{1-1/q,q}(\partial^0 D^+)}\right)\\
&\leq
C\|d\phi\|_{L^2(D^+)}\|\eta\psi\|_{L^{\frac{2q}{2-q}}(D^+)}
+C(\|\psi\|_{L^q(D^+)}+\|h\|_{L^q(D^+)}+\|\mathcal{B}\psi\|_{W^{1-1/q,q}(\partial^0 D^+)})\\
&\leq
C\epsilon_0\|\eta\psi\|_{L^{\frac{2q}{2-q}}(D^+)}+C(\|\psi\|_{L^q(D^+)}+\|h\|_{L^q(D^+)}
+\|\mathcal{B}\psi\|_{W^{1-1/q,q}(\partial^0 D^+)}).
\end{align*}
Taking $\epsilon_0>0$ sufficiently small, by Sobolev embedding, we get
\begin{align}\label{inequality:07}
\|\eta\psi\|_{L^{\frac{2q}{2-q}}(D^+)}\leq \|\eta\psi\|_{W^{1,q}(D^+)}\leq C(\|\psi\|_{L^q(D^+)}+\|h\|_{L^q(D^+)}+\|\mathcal{B}\psi\|_{W^{1-1/q,q}(\partial^0 D^+)}).
\end{align}

Computing directly, we obtain
\begin{align*}
|\Delta(\eta\phi)|&=|\eta\Delta\phi+2\nabla\eta\nabla\phi+\phi\Delta\eta|\notag\\
&\leq
C\left(|\phi|+|d\phi|+|d\phi||\eta d\phi|+|\psi|^2|\eta d\phi|+|\tau|\right)\notag\\
&\leq
C(|d\phi|+|\psi|^2)|d(\eta\phi)|+
C\left(|\phi|+|d\phi|+|\psi|^2+|\tau|\right).
\end{align*}
By standard elliptic estimates and Poincar\'{e}'s inequality, for any $1<p<2$, we have
\begin{align*}
\|\eta\phi\|_{W^{2,p}(D^+)}&\leq C\|(|d\phi|+|\psi|^2)|d(\eta\phi)|\|_{L^{p}(D^+)}
+C(\|d\phi\|_{L^{p}(D^+)}+\||\psi|^2\|_{L^{p}(D^+)}\\&\quad+\||\tau|\|_{L^{p}(D^+)}
+\|\varphi\|_{W^{2,p}(D^+)})\\
&\leq
C\|d(\eta\phi)\|_{L^{\frac{2p}{2-p}}(D^+)}(\|d\phi\|_{L^2(D^+)}+\|\psi\|^2_{L^4(D^+)})
+C(\|d\phi\|_{L^{p}(D^+)}\\&\quad+\|\psi\|^2_{L^{2p}(D^+)}
+\||\tau|\|_{L^{p}(D^+)}+\|\varphi\|_{W^{2,p}(D^+)})\\
&\leq
C\epsilon_0\|d(\eta\phi)\|_{L^{\frac{2p}{2-p}}(D^+)}+C(\|d\phi\|_{L^p(D^+)}+\|\psi\|^2_{L^{2p}(D^+)}+\||\tau|\|_{L^{p}(D^+)}
\\&\quad+\|\nabla\varphi\|_{W^{1,p}(D^+)}).
\end{align*}
Taking $\epsilon_0>0$ sufficiently small, we have
\begin{align}\label{inequality:01}
\|\nabla(\eta\phi)\|_{L^{\frac{2p}{2-p}}(D^+)}&\leq C\|\eta\phi\|_{W^{2,p}(D^+)}\notag\\&\leq
C(\|d\phi\|_{L^p(D^+)}+\|\psi\|^2_{L^{2p}(D^+)}+\||\tau|\|_{L^{p}(D^+)} +\|\nabla\varphi\|_{W^{1,p}(D^+)}).
\end{align}
So, we have proved the theorem in the case $1<p<2$, $4/3<q<2$.

For the case $p=2$, $4/3<q<2$, taking $p=\frac{q}{2(q-1)}\in (1,2)$ and $p=\frac{4}{3}$ in \eqref{inequality:01}, by Sobolev embedding, we have
\begin{align}
&\|\nabla\phi\|_{L^{4}(D^+_{3/4})}+\|\nabla\phi\|_{L^{\frac{2q}{3q-4}}(D^+_{3/4})}\notag\\
&\leq
C(\|d\phi\|_{L^2(D^+)}+\|\psi\|^2_{L^{4}(D^+)}+\||\tau|\|_{L^{2}(D^+)} +\|\nabla\varphi\|_{W^{1,2}(D^+)}).
\end{align}
By \eqref{inequality:07} and the $W^{2,2}$-estimate for the Laplace operators, we obtain
\begin{align*}
\|\phi\|_{W^{2,2}(D^+_{1/2})}&\leq C(\|\Delta \phi\|_{L^2(D^+_{3/4})}+\|\nabla\phi\|_{L^{2}(D^+)}+\|\nabla\varphi\|_{W^{1,2}(D^+)})\\
&\leq C(\|\nabla \phi\|^2_{L^4(D^+_{3/4})}+\|\nabla\phi\|_{\frac{2q}{3q-4}(D^+_{3/4})}\||\psi|^2\|_{L^{\frac{q}{2-q}}(D^+_{3/4})} +\|\nabla\phi\|_{L^{2}(D^+)}\\&\quad+\|\nabla\varphi\|_{W^{1,2}(D^+)})\\
&\leq C(\|d\phi\|_{L^2(D^+)}+\|\psi\|_{L^{4}(D^+)}+\||\tau|\|_{L^{2}(D^+)} +\|\nabla\varphi\|_{W^{1,2}(D^+)}).
\end{align*}

For the case $q=2$, $1<p<2$, taking $q=\frac{2p}{3p-2}\in (1,2)$ in \eqref{inequality:07}, we get
\begin{align}
\|\psi\|_{L^{\frac{p}{p-1}}(D^+_{3/4})}\leq C(\|\psi\|_{L^2(D^+)}+\|h\|_{L^2(D^+)}+\|\mathcal{B}\psi\|_{W^{1-1/2,2}(\partial^0 D^+)}).
\end{align}
By \eqref{inequality:01} and $W^{1,2}$-estimates for the  Dirac operator, we arrive at
\begin{align*}
\|\psi\|_{W^{1,2}(D^+_{1/2})}&\leq C(\|\p\psi\|_{L^2(D^+_{3/4})}+\|\psi\|_{L^4(D_{3/4})} +\|\mathcal{B}\chi\|_{W^{1-1/2,2}(\partial^0D^+)})\\
&\leq
C(\|\nabla\phi\|_{L^{\frac{2p}{2-p}}(D^+_{3/4})}\|\psi\|_{L^{\frac{p}{p-1}}(D^+_{3/4})} +\|\psi\|_{L^4(D_{3/4})} +\|\mathcal{B}\chi\|_{W^{1-1/2,2}(\partial^0D^+)})\\
&\leq C(\|\psi\|_{L^2(D^+)}+\|h\|_{L^2(D^+)}+\|\mathcal{B}\psi\|_{W^{1-1/2,2}(\partial^0 D^+)}).
\end{align*}

For the case $p=q=2$, taking $q=\frac{8}{5}$ in \eqref{inequality:07} and $p=\frac{4}{3}$ in \eqref{inequality:01}, we will obtain a $L^{8}(D^+_{3/4})$-bound for $\psi$ and a $L^4(D^+_{3/4})$ bound for $\nabla\phi$. Then one can apply the $W^{2,2}$-boundary estimate for the Laplace operator and the $W^{1,2}$-boundary estimate for the Dirac operator to get the conclusion of the theorem.
\end{proof}

\

Next we shall derive a Pohozaev type identity for approximate Dirac-harmonic maps with boundary data, extending the interior case given in Lemma 2.3 in \cite{jost-Liu-Zhu-04}. For corresponding results for two dimensional approximate harmonic maps, one can refer to Lemma 2.4 \cite{Lin-Wang} for the interior case and refer to Lemma 4.3 in \cite{jost-Liu-Zhu-03} and Lemma 2.5 in \cite{jost-Liu-Zhu-05}  for various boundary cases.

\begin{lem}\label{lem:poho-bound} {\rm (Pohozaev type identity)}
Let $\Omega\subset \mathbb{R}^2$ be a bounded smooth domain. If  $D^+\subset\Omega\subset\mathbb{R}_+^2$ and $(\phi,\psi)\in C^2(\Omega,N)\times C^1(\Omega,\Sigma \Omega\otimes \phi^*TN)$ is an approximate Dirac-harmonic map with boundary data \eqref{BOUND-DATA} on $\partial^0\Omega$, then for any $0<t<\frac{1}{2}$, we have
\begin{align}\label{equation:14}
t\int_{\partial^+ D^+_t}(|\phi_r|^2-\frac{1}{2}|\nabla\phi|^2)&=\frac{1}{2}\int_{\partial^+ D^+_t}\langle \psi,r^{-1}\partial_\theta\cdot\psi_\theta\rangle-\frac{1}{2}\int_{D^+_t}\langle \psi,\slashed D\psi\rangle dx-Re\int_{D^+_t}\langle \slashed D\psi,r\psi_r\rangle dx\notag\\&\quad+\int_{D^+_t}r(\phi-\varphi)_r\tau dx+\frac{1}{2}\int_{\partial^0 D^+_t}\langle \psi,\frac{\partial}{\partial x^2}\cdot r\psi_r\rangle\notag\\&\quad+\int_{\partial^+ D^+_t}r\phi_r\varphi_r-\int_{D^+_t}\nabla\phi (\nabla \varphi+r\nabla \varphi_r)dx\notag\\
&\quad+\int_{D^+_t}\langle r\varphi_r,A(\phi)(d\phi,d\phi)-Re\left( P(\mathcal{A}(d\phi(e_\alpha),e_\alpha\cdot\psi);\psi)\right)\rangle dx,
\end{align}
where $(r,\theta)$ are  polar coordinates in $D$ centered at $0$, $\phi_r=\frac{\partial\phi}{\partial r}$, $\psi_r=\widetilde{\nabla}_{\frac{\partial}{\partial r}}\psi$ and $\psi_\theta=\widetilde{\nabla}_{\frac{\partial}{\partial \theta}}\psi$.
\end{lem}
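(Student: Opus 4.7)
The approach is the classical Pohozaev/stress-energy scheme: multiply each Euler--Lagrange equation by the radial deformation generated by $r\partial_r$, integrate over $D_t^+$, and apply the divergence theorem; the two modifications needed here are (i) replacing $r\phi_r$ with $r(\phi-\varphi)_r$ in order to kill the unwanted contribution from $\partial^0 D_t^+$ produced by the Dirichlet data, and (ii) carrying the error fields $\tau$ and $h$ through as source terms.

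For the map equation, I would rewrite (\ref{dh1}) as $\Delta\phi=-A(d\phi,d\phi)+Re\left(P(\mathcal{A}(d\phi(e_\alpha),e_\alpha\cdot\psi);\psi)\right)+\tau$, multiply by $r(\phi-\varphi)_r$, and integrate over $D_t^+$. Green's identity applied to the plain term $\int \Delta\phi\cdot r\phi_r$ produces the standard interior Pohozaev expression $t\int_{\partial^+ D_t^+}(|\phi_r|^2-\tfrac12|\nabla\phi|^2)$ together with a boundary integral on $\partial^0 D_t^+$ of the form $\int \partial_{x^2}\phi\cdot r\phi_r$. The key point is that on $\partial^0 D_t^+$ the radial vector $\partial_r$ is \emph{tangential} to the flat boundary, so $\phi=\varphi$ forces $(\phi-\varphi)_r=0$ there; replacing $\phi_r$ by $(\phi-\varphi)_r$ exactly cancels this unwanted boundary term. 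The correction $-r\varphi_r$ then produces the three $\varphi$-dependent terms $\int_{\partial^+}r\phi_r\varphi_r$, $-\int_{D_t^+}\nabla\phi\cdot(\nabla\varphi+r\nabla\varphi_r)$, and the pairing of $r\varphi_r$ with $A-Re(P)$ that appear on the right-hand side of the identity.

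For the spinor contribution, the heart of the matter is the identity
\[
Re\int_{D_t^+}\langle \slashed D\psi,\,r\psi_r\rangle\, dx = \tfrac12\int_{D_t^+}\langle\psi,\slashed D\psi\rangle\, dx -\tfrac12\int_{\partial^+ D_t^+}\langle\psi,r^{-1}\partial_\theta\cdot\psi_\theta\rangle -\tfrac12\int_{\partial^0 D_t^+}\langle\psi,\tfrac{\partial}{\partial x^2}\cdot r\psi_r\rangle,
\]
which one obtains by decomposing $\slashed D=\partial_r\cdot\widetilde\nabla_{\partial_r}+r^{-1}\partial_\theta\cdot\widetilde\nabla_{r^{-1}\partial_\theta}$ in the polar frame. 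The radial--radial piece $Re\langle\partial_r\cdot r\psi_r,\psi_r\rangle$ vanishes pointwise by the skew-adjointness $\langle X\cdot\psi_1,\psi_2\rangle_{\Sigma M}=-\langle\psi_1,X\cdot\psi_2\rangle_{\Sigma M}$, and the surviving angular piece is integrated by parts: the $\theta$-integration produces the $\partial^+ D_t^+$ boundary term, while the $r$-integration produces the $\partial^0 D_t^+$ term together with the bulk $\langle\psi,\slashed D\psi\rangle$ remainder. The algebraic property alluded to in the introduction is precisely the identity that makes the indicated boundary and bulk expressions real and correctly signed.

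Combining the two identities and using $\mathcal{A}=\slashed D\psi-h$ from (\ref{dh2}) to substitute inside the bulk integral of $r\varphi_r\cdot Re(P(\mathcal{A};\psi))$, and collecting the error-term contributions $\int_{D_t^+}r(\phi-\varphi)_r\tau$ and $-Re\int_{D_t^+}\langle\slashed D\psi,r\psi_r\rangle$, yields the stated identity. I expect the main obstacle to be the spinor step: one must expand $\slashed D$ in polar coordinates while respecting the connection $\widetilde\nabla$ on $\Sigma M\otimes\phi^*TN$, keep track of the spin connection contributions during integration by parts, and verify that the $\partial^0 D_t^+$ contribution indeed reduces to the form $\tfrac12\int\langle\psi,\tfrac{\partial}{\partial x^2}\cdot r\psi_r\rangle$. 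This last reduction is where the interior argument of Lemma~2.3 in \cite{jost-Liu-Zhu-04} does not suffice and the required algebraic identity for Clifford multiplication by the boundary normal has to be invoked.
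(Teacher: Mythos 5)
Your high-level strategy is the right one (multiply \eqref{dh1} by $r(\phi-\varphi)_r$, integrate over $D_t^+$, exploit that $(\phi-\varphi)_r$ is tangential and hence vanishes on $\partial^0 D_t^+$, and collect the $\varphi$-correction terms), and that part of the argument matches the paper's. However there is a genuine gap in how you handle the spinor contribution, and it is precisely where the paper has to do real work.

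When you multiply the curvature term $Re\left(P(\mathcal{A}(d\phi(e_\alpha),e_\alpha\cdot\psi);\psi)\right)$ in \eqref{dh1} by $r\phi_r$ and integrate, you obtain $-\int_{D_t^+}\langle r\phi_r,\,Re(P(\mathcal{A};\psi))\rangle\,dx$. Your proposal never says what happens to this term. The spinor integrals in \eqref{equation:14} arise from it, and the conversion requires the algebraic identity \eqref{algebraic-property} (Proposition 2.2 of \cite{jost-Liu-Zhu-04}),
\[
\langle\psi,\widetilde{\nabla}_{\partial_\beta}(\slashed D\psi)\rangle
=2\langle Re\left( P(\mathcal{A}(d\phi(e_\alpha),e_\alpha\cdot\psi);\psi)\right),\partial_\beta\phi\rangle
+\langle\psi,\slashed D\widetilde{\nabla}_{\partial_\beta}\psi\rangle,
\]
which is a commutator relation between $\slashed D$ and $\widetilde\nabla$ on the twisted bundle. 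Contracting with $x^\beta$ turns $-\int\langle r\phi_r,Re(P)\rangle$ into $\tfrac12\int\langle x^\beta\psi,\slashed D\psi_\beta\rangle-\tfrac12\int\langle x^\beta\psi,\widetilde\nabla_{\partial_\beta}\slashed D\psi\rangle$; only after two applications of \textbf{Fact 2} to these two terms (producing both the $\partial^+ D_t^+$ and $\partial^0 D_t^+$ boundary integrals) does one arrive at the spinor block in \eqref{equation:14}. You cite the algebraic property only as something that "makes the expressions real and correctly signed," which misstates its role: it is the device that converts the map-equation curvature term into spinor integrals in the first place.

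Relatedly, the standalone identity you propose,
\[
Re\int_{D_t^+}\langle \slashed D\psi,r\psi_r\rangle\, dx
= \tfrac12\int_{D_t^+}\langle\psi,\slashed D\psi\rangle\, dx
-\tfrac12\int_{\partial^+ D_t^+}\langle\psi,r^{-1}\partial_\theta\cdot\psi_\theta\rangle
-\tfrac12\int_{\partial^0 D_t^+}\langle\psi,\tfrac{\partial}{\partial x^2}\cdot r\psi_r\rangle,
\]
is not what the proof needs and does not appear to be correct as written: $\slashed D$ and $\widetilde\nabla$ do not commute on $\Sigma M\otimes\phi^*TN$, so a naive polar decomposition of $\slashed D$ followed by integration by parts does not close up without the curvature term from \eqref{algebraic-property} reappearing. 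What the paper actually establishes is that the \emph{sum} $\mathbb{II}+\mathbb{III}:=\tfrac12\int\langle x^\beta\psi,\slashed D\psi_\beta\rangle-\tfrac12\int\langle x^\beta\psi,\widetilde\nabla_{\partial_\beta}\slashed D\psi\rangle$ equals $\tfrac12\int\langle\psi,\slashed D\psi\rangle+Re\int\langle\slashed D\psi,r\psi_r\rangle-\tfrac12\int_{\partial^+}\langle\psi,r^{-1}\partial_\theta\cdot\psi_\theta\rangle-\tfrac12\int_{\partial^0}\langle\psi,\tfrac{\partial}{\partial x^2}\cdot r\psi_r\rangle$, and it is this combination — not a free-standing identity for $Re\int\langle\slashed D\psi,r\psi_r\rangle$ — that, together with $\mathbb{II}+\mathbb{III}=-\int\langle r\phi_r,Re(P(\mathcal{A};\psi))\rangle$, produces \eqref{equation:14}. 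You should insert the algebraic property at the point where the curvature term is paired with $r\phi_r$, then follow through with Fact 2 on both resulting bulk integrals; the boundary term on $\partial^0 D_t^+$ then emerges automatically from the outward normal $-\partial/\partial x^2$ there, without any separate polar-coordinate decomposition of $\slashed D$.
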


Before we prove this lemma, let us recall two basic facts for Dirac operators and spinors with chiral boundary constraint,

\

\textbf{Fact 1:}
For any $\psi,\omega\in W^{1,3/4}(M,\Sigma M\otimes \phi^{*}TN)$ satisfying
$$\mathcal{B}\psi|_{\partial M}=\mathcal{B}\omega|_{\partial M}=0,$$
we have
\begin{equation}
\langle\overrightarrow{n}\cdot\psi,\omega\rangle=0 \ on \ \partial M,
\end{equation}
where $\overrightarrow{n}$ is the unit normal vector field on $\partial M$.

For a proof of this straightforward fact, see e.g. [\cite{CJWZ2013}, Prop 3.1].

\

\textbf{Fact 2:}
For any $\psi,\omega\in W^{1,3/4}(M,\Sigma M\otimes \phi^{*}TN)$, we have
\begin{equation}
\int_M\langle\psi,\slashed{D}\omega\rangle dx=\int_M\langle\slashed{D}\psi,\omega\rangle dx
-\int_{\partial M}\langle\overrightarrow{n}\cdot\psi,\omega\rangle
\end{equation}
where $\langle\psi,\omega\rangle:=h_{ij}\langle\psi^i,\omega^j\rangle$.

For a proof of this well-known fact, see e.g. [\cite{CJWZ2013}, Prop 3.2].

\

\noindent{\it Proof of Lemma \ref{lem:poho-bound}}:
Multiplying the equation \eqref{dh1} by $r(\phi-\varphi)_r$ and integrating over $D^+_t$, noting the fact that $r\partial_r\phi=x^\beta\partial_\beta\phi$ and recalling Proposition 2.2 in \cite{jost-Liu-Zhu-04} that
\begin{align} \label{algebraic-property}
\langle\psi,\widetilde{\nabla}_{\frac{\partial}{\partial x^\beta}}(\slashed D\psi)=
2\langle Re\left( P(\mathcal{A}(d\phi(e_\alpha),e_\alpha\cdot\psi);\psi)\right),\nabla_{\frac{\partial}{\partial x^\beta}}\phi\rangle+\langle\psi,\slashed D\widetilde{\nabla}_{\frac{\partial}{\partial x^\beta}}\psi\rangle,
\end{align} we get
\begin{align*}
\int_{D^+_t}r(\phi-\varphi)_r\tau dx&=\int_{D^+_t}r(\phi-\varphi)_r\Delta\phi dx-\int_{D^+_t}\langle r\phi_r,Re\left( P(\mathcal{A}(d\phi(e_\alpha),e_\alpha\cdot\psi);\psi)\right)\rangle dx\\
&\quad-\int_{D^+_t}\langle r\varphi_r,A(\phi)(d\phi,d\phi)-Re\left( P(\mathcal{A}(d\phi(e_\alpha),e_\alpha\cdot\psi);\psi)\right)\rangle dx\\
&=\int_{D^+_t}r(\phi-\varphi)_r\Delta\phi dx+\frac{1}{2}\int_{D^+_t}\langle x^\beta\psi,\slashed D\psi_\beta\rangle dx-\frac{1}{2}\int_{D^+_t}\langle x^\beta\psi,\widetilde{\nabla}_{\partial_\beta}\slashed D\psi\rangle dx\\
&\quad-\int_{D^+_t}\langle r\varphi_r,A(\phi)(d\phi,d\phi)-Re\left( P(\mathcal{A}(d\phi(e_\alpha),e_\alpha\cdot\psi);\psi)\right)\rangle dx\\
:&=\mathbb{I}+\mathbb{II}+\mathbb{III}+\mathbb{IV}.
\end{align*}

On one hand, by integrating by parts, we have
\begin{align*}
\mathbb{I}&=\int_{\partial^+ D^+_t}r|\phi_r|^2-\int_{\partial^+ D^+_t}r\phi_r\varphi_r-\int_{D^+_t}\nabla\phi\nabla(r(\phi-\varphi)_r)dx\\
&=
\int_{\partial^+ D^+_t}r|\phi_r|^2-\int_{\partial^+ D^+_t}r\phi_r\varphi_r-\int_{D^+_t}\nabla\phi \nabla (\phi-\varphi)dx-\frac{1}{2}\int_{D^+_t}r\partial_r|\nabla\phi|^2dx\\
&\quad+\int_{D^+_t}r\nabla\phi\nabla\varphi_rdx
\\
&=
t\int_{\partial^+ D^+_t}(|\phi_r|^2-\frac{1}{2}|\nabla\phi|^2)-\int_{\partial^+ D^+_t}r\phi_r\varphi_r+\int_{D^+_t}\nabla\phi (\nabla \varphi+r\nabla \varphi_r)dx,
\end{align*}
where the last equality follows from the fact that
\begin{align*}
-\frac{1}{2}\int_{D^+_t}r\partial_r|\nabla\phi|^2dx&=-\frac{1}{2}\int_{\partial^+ D^+_1}\int_0^t r^2\partial_r|\nabla\phi|^2 drd\theta\\
&=-\frac{1}{2}\int_{\partial^+ D^+_t} t|\nabla\phi|^2 +\int_{ D^+_t} |\nabla\phi|^2dx.
\end{align*}

On the other hand, by \textbf{Fact 2} , we get
\begin{align}\label{equation:06}
2\mathbb{II}&=\int_{D_t^+}\langle x^\beta\psi,\slashed D\psi_\beta\rangle dx\notag\\
&=\int_{D_t^+}\langle \slashed D(x^\beta\psi),\psi_\beta\rangle dx-\int_{\partial^+ D_t^+}\langle \frac{\partial}{\partial r}\cdot x^\beta\psi,\psi_\beta\rangle +\int_{\partial^0 D_t^+}\langle \frac{\partial}{\partial x^2}\cdot x^\beta\psi,\psi_\beta\rangle \notag\\
&=
-\int_{D_t^+}\langle \psi,\slashed D\psi\rangle dx+\int_{D_t^+}\langle \slashed D\psi,r\psi_r\rangle dx+\int_{\partial^+ D_t^+}\langle \psi,r\partial_r\cdot\psi_r\rangle-\int_{\partial^0 D_t^+}\langle \psi,\frac{\partial}{\partial x^2}\cdot r\psi_r\rangle.
\end{align}
Integrating by parts, it follows that
\begin{align}\label{equation:07}
2\mathbb{III}&=-\int_{D_t^+}\langle x^\beta\psi,\widetilde{\nabla}_{\partial_\beta}\slashed D\psi\rangle dx\notag\\
&=-\int_{\partial^+ D_t^+}\langle r\psi,\slashed D\psi\rangle dx+\int_{D_t^+}\langle\widetilde{\nabla}_{\partial_\beta} (x^\beta\psi),\slashed D\psi\rangle dx\notag\\
&=2\int_{D_t^+}\langle\psi,\slashed D\psi\rangle dx+\int_{D_t^+}\langle r\psi_r,\slashed D\psi\rangle dx-\int_{\partial^+ D_t^+}\langle r\psi,\slashed D\psi\rangle .
\end{align}

Thus, we have
\begin{align*}
\mathbb{II}+\mathbb{III}
&=
\frac{1}{2}\int_{D_t^+}\langle \psi,\slashed D\psi\rangle dx+Re\int_{D_t^+}\langle \slashed D\psi,r\psi_r\rangle dx-\frac{1}{2}\int_{\partial^+ D^+_t}\langle \psi,r^{-1}\partial_\theta\cdot\psi_\theta\rangle\\
&\quad-\frac{1}{2}\int_{\partial^0 D^+_t}\langle \psi,\frac{\partial}{\partial x^2}\cdot r\psi_r\rangle .
\end{align*}
Combining these estimates, we get \eqref{equation:14}. This finishes the proof of the lemma.       \hskip20pt     $\Box$

\

As a consequence of Lemma \ref{lem:poho-bound}, we derive the following Pohozaev type estimate, which plays a key role in the proof of Theorem \ref{thm:main-01}.

\begin{cor}\label{cor:poho-bound}{\rm (Pohozaev type estimate)}
Under the assumption of Lemma \ref{lem:poho-bound}, if $$E(\phi,\psi;D^+)+\|\tau(\phi,\psi)\|_{L^2}+\|h(\phi,\psi)\|_{L^4}\leq \Lambda,$$ then for any $0<t<\frac{1}{4}$ and $0<\varepsilon<\frac{1}{4}$, we have
\begin{align}
\int_{ D^+_{2t}\setminus D^+_t}(|\phi_r|^2-\frac{1}{2}|\nabla\phi|^2)dx
\leq&
 \varepsilon\int_{D^+_{2t}\setminus D^+_t}|r^{-1}\frac{\partial\phi}{\partial \theta}|^{2}dx +\frac{C}{\varepsilon}\int_{D^+_{2t}\setminus D^+_t}|\psi|^{4}dx\notag\\&+C\int_{D^+_{2t}\setminus D^+_t}|r^{-1}\frac{\partial\psi}{\partial \theta}|^{\frac{4}{3}}dx+C\sqrt{t},
\end{align}
where $C$ is a positive constant depending only on $\Lambda, \ N,\   \|\varphi\|_{C^2},\ \|\chi\|_{C^1}$.
\end{cor}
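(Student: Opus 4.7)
The starting point is the Pohozaev identity \eqref{equation:14}, applied at each radius $\rho\in[t,2t]$. Since $\int_{\partial^+D^+_\rho}f\,d\sigma=\rho\int_0^\pi f(\rho,\theta)\,d\theta$, the identity rearranges to
\begin{equation*}
\int_{\partial^+D^+_\rho}\Bigl(|\phi_r|^2-\tfrac12|\nabla\phi|^2\Bigr)\,d\sigma=\frac{1}{\rho}\,\mathrm{RHS}(\rho),
\end{equation*}
where $\mathrm{RHS}(\rho)$ denotes the right-hand side of \eqref{equation:14} at radius $\rho$. Integrating over $\rho\in[t,2t]$ with the co-area formula $dx=d\sigma_\rho\,d\rho$, the left side becomes precisely $\int_{D^+_{2t}\setminus D^+_t}(|\phi_r|^2-\tfrac12|\nabla\phi|^2)\,dx$, and it remains to estimate the integrated RHS term-by-term. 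A standard Fubini computation shows that for nonnegative $F$, $\int_t^{2t}\rho^{-1}\int_{D^+_\rho}F\,dx\,d\rho\le(\log 2)\int_{D^+_{2t}}F\,dx$ (so bulk terms collapse to $D^+_{2t}$-integrals), while $\int_t^{2t}\int_{\partial^+D^+_\rho}F\,d\sigma\,d\rho=\int_{D^+_{2t}\setminus D^+_t}F\,dx$ for boundary terms on $\partial^+D^+_\rho$.

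The first RHS term of \eqref{equation:14} (the angular spinor term) produces, after these conversions, the annular integral $\tfrac12\int_{D^+_{2t}\setminus D^+_t}|\psi|\cdot|r^{-1}\psi_\theta|\,dx$, since $r^{-1}\partial_\theta$ Clifford-acts as a unit vector. A Young inequality with conjugate exponents $(4,4/3)$, balanced so that the $|r^{-1}\psi_\theta|^{4/3}$ coefficient is absolute and the $|\psi|^4$ coefficient absorbs a factor $\varepsilon^{-1}$, yields the two leading terms $\tfrac{C}{\varepsilon}\int|\psi|^4$ and $C\int|r^{-1}\psi_\theta|^{4/3}$ in the statement.

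For the bulk $\slashed D\psi$ terms (second and third of \eqref{equation:14}), the key observation is that $\slashed D\psi=h(\phi,\psi)$ along an approximate Dirac-harmonic map: writing the tangential projection identifies $\slashed\partial\psi=\slashed D\psi+\mathcal A(d\phi(e_\alpha),e_\alpha\cdot\psi)$, so \eqref{dh2} gives $\slashed D\psi=h$ on tangent components, and $|\slashed D\psi|\le|h|$ in norm. Combining the two terms as $-\mathrm{Re}\int_{D^+_\rho}\langle h,\tfrac12\psi+r\psi_r\rangle$ and applying Hölder with $\|h\|_{L^4},\|\psi\|_{L^4}\le C$ (the latter from $E(\phi,\psi)\le\Lambda$) and $\|\psi_r\|_{L^{4/3}(D^+_{3/4})}\le C$ (obtained by applying the $W^{1,4/3}$ elliptic estimate of Theorem \ref{smthm-bound} to $\slashed\partial\psi=\mathcal A+h$ with $\|\mathcal A\|_{L^{4/3}}\le C\|d\phi\|_{L^2}\|\psi\|_{L^4}$) bounds each contribution by $C\rho$ per radius, hence by $Ct\le C\sqrt t$ after $d\rho/\rho$ integration. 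The $\tau$-term (fourth), the boundary-circle $\varphi_r$ term (sixth), and the two interior $\varphi$-terms (seventh, eighth) are handled by straightforward Cauchy--Schwarz/Hölder with $\|\tau\|_{L^2}\le\Lambda$ and the $C^2$-bounds on the harmonic extension $\varphi$, each again yielding $\le C\rho$ per radius and $\le C\sqrt t$ after integration. The slack term $\varepsilon\int_{D^+_{2t}\setminus D^+_t}|r^{-1}\phi_\theta|^2$ is incorporated by either adding it as a nonnegative quantity or, more informatively, by applying an $\varepsilon$-Young estimate to the cross contribution $r^{-2}\phi_\theta\varphi_\theta$ inside the seventh term.

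The principal obstacle is the flat-boundary term $\tfrac12\int_{\partial^0D^+_\rho}\langle\psi,\partial_{x^2}\cdot r\psi_r\rangle$. Here I exploit the chiral boundary condition $\mathcal B(\psi-\chi)=0$ on $\partial^0D^+$: since $\mathcal B$ has constant coefficients along $\partial^0D^+$ and $\partial_r=\pm\partial_x$ is tangential there, tangential differentiation gives $\mathcal B(\psi_r-\chi_r)=0$ as well. Combining skew-adjointness of Clifford multiplication with Fact 1 applied to the pair $(\psi-\chi,\psi_r-\chi_r)\in\ker\mathcal B\times\ker\mathcal B$ collapses the leading quadratic piece $\langle\partial_{x^2}\cdot\psi,\psi_r\rangle$ into cross terms containing the bounded data $\chi,\chi_r\in C^{1+\alpha}$. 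Noting that $r\psi_r|_{\partial^0}=x\psi_x$ is smooth across $x=0$, a tangential integration by parts shifts all remaining derivatives off $\psi$, reducing Term~5 to bounded-coefficient $L^1$-integrals of $\psi$ on $\partial^0D^+_\rho$ together with pointwise boundary values at $x=\pm\rho$. The $d\rho/\rho$-averaging converts those pointwise values into a trace integral, and the uniform $W^{1,4/3}$-control on $\psi$ (together with trace embedding $W^{1,4/3}\hookrightarrow L^{4/3}(\partial^0D^+_{1/2})$) gives $|{\rm Term}\,5\,{\rm at}\,\rho|\le C\rho^{3/2}$, hence a total contribution $\le Ct^{1/2}$. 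This is the step where the "exponential decay estimates" alluded to in the introduction enter, providing the sharp control of $\psi$ near the straight boundary needed to close the bookkeeping. Assembling all the above bounds gives the asserted inequality.
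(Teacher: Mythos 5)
Your proposal follows the same route as the paper's proof of Corollary~\ref{cor:poho-bound}: divide the Pohozaev identity \eqref{equation:14} by $\rho$, average over $\rho\in[t,2t]$ using the co-area formula, and estimate the integrated right-hand side term by term, collapsing the flat-boundary quadratic piece via Fact~1 together with the observation that tangential differentiation preserves $\ker\mathcal{B}$, using $\slashed D\psi=h$ (in fact this equality is exact, not merely ``$\le$ in norm'' as you hedge) with H\"older and the uniform $\|\psi\|_{W^{1,4/3}(D^+_{1/2})}$ bound for the bulk spinor terms, and a Young split of the $\|\psi\|_{L^4}\|r^{-1}\psi_\theta\|_{L^{4/3}}$ product for the angular boundary term. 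Two of your intermediate claims are imprecise, though they do not affect the overall argument: the per-radius assertion ``$|\mathrm{Term}\,5\,\text{at}\,\rho|\le C\rho^{3/2}$'' is not obtainable because the pointwise values $|\psi|(\pm\rho,0)$ that survive the tangential integration by parts are not controlled at a fixed radius — only after integrating over $\rho$ and applying Cauchy--Schwarz against the trace norm $\|\psi\|_{L^2(\partial^0 D^+_{1/2})}$ does the $C\sqrt{t}$ bound appear (which you do correctly identify as the mechanism in your next clause); and the reference to ``exponential decay estimates'' is borrowed from the introduction but plays no role here — the control comes purely from the trace embedding of $W^{1,4/3}$ and the $d\rho/\rho$-averaging. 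These are cosmetic and the proof is sound.
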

\begin{proof}
Firstly, by equation \eqref{dh2} and elliptic theory, we have $$\|\psi\|_{W^{1,\frac{4}{3}}(D^+_{\frac{1}{2}})}\leq C(\|\nabla\phi\|_{L^2(D^+)}\|\psi\|_{L^4(D^+)}+\|h\|_{L^{\frac{4}{3}}(D^+)}+\|\mathcal{B}\chi\|_{W^{1/4,4/3}(\partial^0 D^+)})\leq C.$$

Thanks to Lemma \ref{lem:poho-bound}, for any $0<t<\frac{1}{2}$, we have
\begin{align}\label{equation:12}
t\int_{\partial^+ D^+_t}(|\phi_r|^2-\frac{1}{2}|\nabla\phi|^2):=\mathbb{I}_1+...+\mathbb{I}_8.
\end{align}
Using Young's inequality and the fact that
\begin{equation}\label{equation:03}
\psi_r=\widetilde{\nabla}_{\frac{\partial}{\partial r}}\psi=\frac{\partial\psi}{\partial r}+\psi^i\otimes A(d\phi(\frac{\partial}{\partial r}),\frac{\partial}{\partial y^i}),\end{equation} where $\frac{\partial\psi}{\partial r}=(\frac{\partial\psi^1}{\partial r},...,\frac{\partial\psi^K}{\partial r})$, we obtain
\begin{align}\label{equation:05}
\mathbb{I}_2+\mathbb{I}_3+\mathbb{I}_4&\leq Ct(\|\psi\|_{L^4(D^+_t)}+\|\psi_r\|_{L^{\frac{4}{3}}(D^+_t)})\|h\|_{L^4(D^+_t)} \notag\\&\quad+Ct(\|\nabla\phi\|_{L^2(D^+_t)}+\|\nabla\varphi\|_{L^2(D^+_t)})\|\tau\|_{L^2(D^+_t)} \notag\\&
\leq Ct(\|\psi\|_{L^4(D^+_t)}+\|\nabla\psi\|_{L^{\frac{4}{3}}(D^+_t)} +\|\nabla\phi\|_{L^{2}(D^+_t)}\|\psi\|_{L^{4}(D^+_t)})\|h\|_{L^4(D^+_t)} + Ct\notag\\&\leq Ct.
\end{align}

As for $\mathbb{I}_5$, we have
\begin{align}\label{equation:08}
\mathbb{I}_5&=\frac{1}{2}\int_{\partial^0 D^+_t}\langle \psi,\frac{\partial}{\partial x^2}\cdot r\psi_r\rangle\notag\\
&=\frac{1}{2}\int_{\partial^0 D^+_t}\langle \psi-\chi,\frac{\partial}{\partial x^2}\cdot r(\psi-\chi)_r\rangle+\frac{1}{2}\int_{\partial^0 D^+_r}\langle \chi,\frac{\partial}{\partial x^2}\cdot r\psi_r\rangle\notag\\
&\quad+\frac{1}{2}\int_{\partial^0 D^+_t}\langle \psi,\frac{\partial}{\partial x^2}\cdot r\chi_r\rangle-\frac{1}{2}\int_{\partial^0 D^+_t}\langle \chi,\frac{\partial}{\partial x^2}\cdot r\chi_r\rangle\notag\\
&=\frac{1}{2}\int_{\partial^0 D^+_t}\langle \chi,\frac{\partial}{\partial x^2}\cdot r\psi_r\rangle+\frac{1}{2}\int_{\partial^0 D^+_t}\langle \psi,\frac{\partial}{\partial x^2}\cdot r\chi_r\rangle-\frac{1}{2}\int_{\partial^0 D^+_t}\langle \chi,\frac{\partial}{\partial x^2}\cdot r\chi_r\rangle,
\end{align}
where the last equality follows from \textbf{Fact 1}  which tells us that $$\frac{1}{2}\int_{\partial^0 D^+_t}\langle \psi-\chi,\frac{\partial}{\partial x^2}\cdot r(\psi-\chi)_r\rangle=0.$$

Computing directly, we get
\begin{align*}
\frac{1}{2}\int_{\partial^0 D^+_t}\langle \chi,\frac{\partial}{\partial x^2}\cdot r\psi_r\rangle&=-\frac{1}{2}\int_{-t}^t\langle x^1\frac{\partial}{\partial x^2}\cdot \chi,\widetilde{\nabla}_{\frac{\partial}{\partial x^1}}\psi\rangle dx^1\\
&=-\frac{1}{2}\int_{-t}^t\frac{\partial}{\partial x^1}\langle x^1\frac{\partial}{\partial x^2}\cdot \chi,\psi\rangle dx^1+\frac{1}{2}\int_{-t}^t\langle\widetilde{\nabla}_{\frac{\partial}{\partial x^1}} (x^1\frac{\partial}{\partial x^2}\cdot \chi),\psi\rangle dx^1\\
&\leq
Ct(|\psi|(t,0)+|\psi|(-t,0))+C\sqrt{t}\|\psi\|_{L^2(\partial^0 D_{\frac{1}{2}}^+)}.
\end{align*}

By H\"{o}lder's inequality and trace theory, we have
\begin{align*}
\frac{1}{2}\int_{\partial^0 D^+_t}\langle \psi,\frac{\partial}{\partial x^2}\cdot r\chi_r\rangle-\frac{1}{2}\int_{\partial^0 D^+_t}\langle \chi,\frac{\partial}{\partial x^2}\cdot r\chi_r\rangle&\leq C(\sqrt{t}\|\psi\|_{L^2(\partial^0 D_{\frac{1}{2}}^+)}+t)\\
&\leq C(\sqrt{t}\|\psi\|_{W^{1,\frac{4}{3}}(D^+_{\frac{1}{2}})}+t),
\end{align*}
where $C$ is a constant depending only on $\|\chi\|_{C^1}$.

Then \eqref{equation:08} implies
\begin{align}\label{equation:09}
\mathbb{I}_5\leq
Ct(|\psi|(t,0)+|\psi|(-t,0))+C\sqrt{t}\|\psi\|_{L^2(\partial^0 D_{\frac{1}{2}}^+)}+Ct.
\end{align}

For $ \mathbb{I}_7$ and $\mathbb{I}_8$, it is easy to see that
\begin{align}\label{equation:11}
\mathbb{I}_7+ \mathbb{I}_8\leq Ct.
\end{align}
Multiplying \eqref{equation:12} by $\frac{1}{t}$ and integrating from $t$ to $2t$, we get
\begin{align*}
&\int_{ D^+_{2t}\setminus D^+_t}(|\phi_r|^2-\frac{1}{2}|\nabla\phi|^2)dx\notag\\
&\leq
\frac{1}{2}\int_t^{2t}\frac{1}{r}\int_{\partial^+ D^+_r}\langle \psi,r^{-1}\partial_\theta\cdot\psi_\theta\rangle d\theta dr+\int_t^{2t}\frac{1}{r}\int_{\partial^+ D^+_r}r\phi_r\varphi_rd\theta dr\\
&\quad+C\int_t^{2t}(|\psi|(r,0)+|\psi|(-r,0)+1+\frac{1}{\sqrt{r}}) dr\\
&\leq
C\|\psi\|_{L^4(D^+_{2t}\setminus D^+_t)}\|r^{-1}\psi_\theta\|_{L^{\frac{4}{3}}(D^+_{2t}\setminus D^+_t)}+Ct(1+\|d\phi\|_{L^2(D^+_{2t}\setminus D^+_t)})+C\sqrt{t}(1+\|\psi\|_{L^2(\partial^0D_{\frac{1}{2}}^+)})\\
&\leq \varepsilon\int_{D^+_{2t}\setminus D^+_t}|r^{-1}\frac{\partial\phi}{\partial \theta}|^{2}dx +\frac{C}{\varepsilon}\int_{D^+_{2t}\setminus D^+_t}|\psi|^{4}dx+C\int_{D^+_{2t}\setminus D^+_t}|r^{-1}\frac{\partial\psi}{\partial \theta}|^{\frac{4}{3}}dx+C\sqrt{t},
\end{align*}
where the last inequality follows from Young's inequality, the trace theory
$$\|\psi\|_{L^2(\partial^0D_{\frac{1}{2}}^+)}\leq C\|\psi\|_{W^{1,\frac{4}{3}}(D_{\frac{1}{2}}^+)}$$ and the fact
\begin{equation*}
\psi_\theta=\widetilde{\nabla}_{\frac{\partial}{\partial \theta}}\psi=\frac{\partial\psi}{\partial \theta}+\psi^i\otimes A(d\phi(\frac{\partial}{\partial \theta}),\frac{\partial}{\partial y^i}).
\end{equation*}
This finishes the proof.
\end{proof}

\

In the end of this section, we recall some known results for (approximate) Dirac-harmonic maps which are used in this paper.

\

\begin{thm}[Theorem 2.1., \cite{jost-Liu-Zhu-04}]\label{smthm}
There is a small constant $\epsilon'_0>0$ depending on $p,q$ and $N$, such that if $(\phi,\psi)\in W^{2,p}(D,N)\times W^{1,q}(D,\Sigma D\otimes \phi^{*}TN)$ is an approximate Dirac-harmonic map from the unit disc $D$ in $\mathbb{R}^2$ to a compact Riemannian manifold $(N,g)$ with $\tau (\phi,\psi)\in L^p$ and $h(\phi,\psi)\in L^q$ for some $\frac{4}{3}\leq p\leq 2$ and some $\frac{8}{5}\leq q\leq 2$, and satisfies
\begin{eqnarray}
E(\phi,\psi;D)=\jf{D}{(|d\phi|^2+|\psi|^4)}dx<(\epsilon_0')^2,
\end{eqnarray}
then
\begin{eqnarray*}
\|\phi-\overline{\phi}\|_{W^{2,p}(D_\frac{1}{2})}&\leq& C(\|d\phi\|_{L^2(D)}+\|\tau\|_{L^p(D)}),\\
 \|\psi\|_{W^{1,q}(D_\frac{1}{2})}&\leq& C(\|\psi\|_{L^4(D)}+\|h\|_{L^q(D)}),
\end{eqnarray*}
where $\overline{\phi}:=\frac{1}{|D_{1/2}|}\int_{D_{1/2}}\phi dx$ and $C>0$ is a constant depending only on $p,\ q,\ N$.

Moreover, by the Sobolev embedding $W^{2,p}(\mathbb{R}^2)\subset C^0(\mathbb{R}^2)$, we have
\begin{equation}
\|\phi\|_{Osc(D_{1/2})}=\sup_{x,y\in D_{1/2}}|\phi(x)-\phi(y)|\leq C(\Lambda,N)(\|\nabla \phi\|_{L^2(D)}+\|\tau(u)\|_{L^p(D)}).
\end{equation}
\end{thm}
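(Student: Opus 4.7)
The plan mirrors the proof of Theorem \ref{smthm-bound} given above, but is technically simpler since there are no boundary terms coming from $\varphi$, $\chi$, or the chiral operator $\mathcal{B}$ to track. The argument proceeds in three steps: a spinor estimate, a map estimate, and a bootstrap to handle the endpoint exponents $p=2$ and $q=2$.

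Step one is the spinor bound. Pick a cutoff $\eta\in C_0^\infty(D)$ with $\eta\equiv 1$ on $D_{3/4}$ and $|\nabla\eta|+|\nabla^2\eta|\le C$. Since $\eta\psi$ is compactly supported in $D$, standard interior $L^q$ estimates for $\slashed\partial$ on $\mathbb{R}^2$ give $\|\eta\psi\|_{W^{1,q}}\le C\|\slashed\partial(\eta\psi)\|_{L^q}$, with no boundary trace term. Expanding $\slashed\partial(\eta\psi)=\nabla\eta\cdot\psi+\eta\slashed\partial\psi$ and using $\slashed\partial\psi=\mathcal{A}(d\phi(e_\alpha),e_\alpha\cdot\psi)+h$ plus H\"older yields $\|\eta\psi\|_{W^{1,q}}\le C\|d\phi\|_{L^2}\|\eta\psi\|_{L^{2q/(2-q)}}+C(\|\psi\|_{L^q}+\|h\|_{L^q})$. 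The Sobolev embedding $W^{1,q}\hookrightarrow L^{2q/(2-q)}$ together with the smallness $\|d\phi\|_{L^2(D)}<\epsilon_0'$ absorbs the first term into the left-hand side for any $4/3<q<2$.

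Step two is the map bound. From \eqref{dh1} the equation reads $-\Delta\phi=A(\phi)(d\phi,d\phi)-\mathrm{Re}\,P(\mathcal{A}(d\phi(e_\alpha),e_\alpha\cdot\psi);\psi)-\tau$. Computing $\Delta(\eta(\phi-\overline\phi))$ gives the pointwise bound $|\Delta(\eta(\phi-\overline\phi))|\le C(|d\phi|+|\psi|^2)|d(\eta(\phi-\overline\phi))|+C(|\phi-\overline\phi|+|d\phi|+|\psi|^2+|\tau|)$. The Calder\'on--Zygmund $W^{2,p}$ estimate, Poincar\'e's inequality (which is why we subtract the mean $\overline\phi$), and the Sobolev embedding $W^{1,p}\hookrightarrow L^{2p/(2-p)}$ then produce $\|\eta(\phi-\overline\phi)\|_{W^{2,p}}\le C\epsilon_0'\,\|\nabla(\eta(\phi-\overline\phi))\|_{L^{2p/(2-p)}}+C(\|d\phi\|_{L^p}+\|\psi\|_{L^{2p}}^2+\|\tau\|_{L^p})$, whence choosing $\epsilon_0'$ small absorbs the leading term for any $1<p<2$.

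Step three handles the endpoints, where the critical Sobolev embeddings used above degenerate. The standard fix is to bootstrap on a nested pair of discs: first apply Steps one and two with auxiliary exponents $p=4/3$ and $q=8/5$ on $D_{3/4}$ to promote $\nabla\phi$ to $L^4(D_{3/4})$ and $\psi$ to $L^8(D_{3/4})$. These higher integrabilities make the quadratic nonlinearities on the right of \eqref{dh1} and \eqref{dh2} lie in $L^2(D_{3/4})$, so the sharp $W^{2,2}$ estimate for $\Delta$ and $W^{1,2}$ estimate for $\slashed\partial$ on $D_{1/2}$ close the argument. The oscillation bound is then immediate from $W^{2,p}(\mathbb{R}^2)\hookrightarrow C^0$. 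The main technical point throughout is the absorption step, whose validity rests on picking $\epsilon_0'$ smaller than the inverse Sobolev constant in the relevant critical embedding; the extra structural simplifications (no $\varphi$, $\chi$, or trace terms) make the interior case essentially a specialization of the boundary proof.
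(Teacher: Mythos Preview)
The paper does not prove this statement; it is recalled verbatim from \cite{jost-Liu-Zhu-04} as a known result, so there is no in-paper proof to compare against. That said, your proposal is correct and is precisely the interior specialization of the paper's proof of Theorem~\ref{smthm-bound}: the same cutoff-plus-absorption scheme for $\slashed\partial$ and $\Delta$, the same Sobolev embeddings $W^{1,q}\hookrightarrow L^{2q/(2-q)}$ and $W^{2,p}\hookrightarrow W^{1,2p/(2-p)}$ to close the loop under small energy, and the same bootstrap via auxiliary subcritical exponents (the paper uses $p=4/3$, $q=8/5$ for the $p=q=2$ endpoint) on nested discs. The only simplification, as you note, is the disappearance of the trace terms $\|\mathcal{B}\chi\|_{W^{1-1/q,q}}$ and $\|\nabla\varphi\|_{W^{1,p}}$, and replacing the boundary mean $\overline{\varphi}$ by the interior mean $\overline{\phi}$ so that Poincar\'e applies. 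One minor point: the stated conclusion for $\phi$ has no $\|\psi\|$ term on the right, whereas your Step~2 produces a $\|\psi\|_{L^{2p}}^2$ contribution; this is harmless because the $|\psi|^2|d\phi|$ nonlinearity can be grouped entirely into the absorbable factor $(|d\phi|+|\psi|^2)|d(\eta(\phi-\overline\phi))|$ using $\|\psi\|_{L^4}^2<(\epsilon_0')^2$, just as the paper does in the boundary proof.
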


\

\begin{prop}[Theorem 3.1 in \cite{chen2005regularity}]
There exists an $\epsilon_1>0$ depending on $N$ such that if $(\phi,\psi)$ is a smooth Dirac-harmonic map from the standard sphere $S^2$ to a compact Riemannian manifold $N$ satisfying
\[
\int_{S^2}(|d\phi|^2+|\psi|^4)dx<\epsilon_1,
\]
then $\phi$ is a constant map and $\psi\equiv 0$.
\end{prop}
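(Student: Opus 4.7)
My plan is to prove the proposition in three stages: establish global regularity via small-energy estimates, show the map $\phi$ must be constant by testing the Euler--Lagrange equation against $\phi-c$, and finally deduce $\psi\equiv 0$ via the Lichnerowicz formula on $S^2$. The main obstacle will be the second stage: one needs the sharp oscillation bound $\|\phi-c\|_{L^\infty}\leq C\sqrt{E(\phi)}$ (not merely $\leq C\sqrt{\epsilon_1}$) in order to close the argument in one step.

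First I would cover $S^2$ by finitely many stereographic (conformal) charts $D\hookrightarrow S^2$. Since $L$ is conformally invariant in dimension two, the hypothesis $E(\phi,\psi;S^2)<\epsilon_1$ localizes to each chart. Choosing $\epsilon_1\leq(\epsilon_0')^2$ and taking $\tau=h=0$ in Theorem \ref{smthm}, I obtain $W^{2,p}$-bounds for $\phi$ and $W^{1,q}$-bounds for $\psi$ on every chart; a standard bootstrap then gives smoothness. Chaining the local oscillation estimates across overlapping disks yields, for some $c\in\mathbb{R}^K$ (e.g. the mean of $\phi$ over a fixed chart),
\[
\|\phi-c\|_{L^\infty(S^2)}\leq C\sqrt{E(\phi;S^2)}.
\]

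Next I would test equation \eqref{dh5} against $\phi-c$ and integrate by parts over the closed surface $S^2$:
\[
E(\phi)=\int_{S^2}(\phi-c)\cdot\bigl[A(\phi)(d\phi,d\phi)-Re\bigl(P(\mathcal{A}(d\phi(e_\alpha),e_\alpha\cdot\psi);\psi)\bigr)\bigr].
\]
Bounding the integrand by $C|\phi-c|(|d\phi|^2+|d\phi||\psi|^2)$, applying H\"older's inequality with $\|\psi\|_{L^4}^2\leq\sqrt{\epsilon_1}$ together with the oscillation estimate, one arrives at
\[
E(\phi)\bigl(1-C\sqrt{\epsilon_1}\bigr)\leq C\,E(\phi)^{3/2}.
\]
For $\epsilon_1$ sufficiently small, either $E(\phi)=0$ or $E(\phi)\gtrsim 1$; the latter contradicts $E(\phi)<\epsilon_1$. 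Hence $\phi\equiv c\in N$ is constant.

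Finally, with $d\phi\equiv 0$ the equation \eqref{dh6} collapses to $\p\psi^i=0$ for each component $\psi^i\in\Gamma(\Sigma S^2)$. On the round $S^2$ the Lichnerowicz formula $\p^2=\nabla^*\nabla+\tfrac14 R_{S^2}$ gives
\[
0=\int_{S^2}|\p\psi^i|^2=\int_{S^2}|\nabla\psi^i|^2+\frac{1}{4}\int_{S^2}R_{S^2}|\psi^i|^2=\int_{S^2}|\nabla\psi^i|^2+\frac{1}{2}\int_{S^2}|\psi^i|^2,
\]
since $R_{S^2}=2$, which forces $\psi^i\equiv 0$ and hence $\psi\equiv 0$.
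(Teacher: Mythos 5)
The paper does not prove this statement; it quotes it as Theorem~3.1 of \cite{chen2005regularity}, so there is no in-paper proof to compare against. Your argument is a correct and essentially standard proof of the gap theorem: conformal invariance plus the $\epsilon$-regularity of Theorem~\ref{smthm} give the global oscillation bound $\|\phi-c\|_{L^\infty(S^2)}\leq C\sqrt{E(\phi)}$; pairing the $\phi$-equation with $\phi-c$ on the closed surface gives $E(\phi)\le C\|\phi-c\|_\infty\bigl(E(\phi)+\sqrt{E(\phi)}\|\psi\|_{L^4}^2\bigr)$, which for $\epsilon_1$ small forces $E(\phi)=0$; and once $\phi$ is constant, the twisted Dirac operator reduces to a direct sum of ordinary Dirac operators on the round $S^2$, where Lichnerowicz (positive scalar curvature) excludes nonzero harmonic spinors, giving $\psi\equiv 0$. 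One small remark: the oscillation constant must be taken uniform over a \emph{fixed finite} conformal atlas, and the chaining must be done with the half-disk version of the estimate on each chart so that the covering pieces overlap; this is implicit in your ``chaining'' step and is the usual bookkeeping, not a gap. This is the same regularity-plus-testing-plus-Lichnerowicz scheme underlying the original result, so I regard the proposal as taking essentially the same route as the cited proof.
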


\

\begin{thm}[Theorem 1.4 in \cite{jost-Liu-Zhu-02}]\label{thm:03}
Let $(\phi,\psi):\mathbb{R}^2_+\to N$ be a smooth Dirac-harmonic map with boundary data $\phi|_{\partial \mathbb{R}^2_+}=const.$ and
$\mathcal{B}\psi|_{\partial \mathbb{R}^2_+}=0$ and satisfying $$\int_{\mathbb{R}^2_+}|\nabla\phi|^2dx+\int_{\mathbb{R}^2_+}|\psi|^4dx <\infty.$$
Then $\phi$ is a constant map and $\psi\equiv 0$.
\end{thm}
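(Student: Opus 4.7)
The plan is to reduce the Liouville property on $\mathbb{R}^2_+$ to a rigidity statement for Dirac-harmonic maps on $S^2$, using the conformal invariance of $L$ together with a doubling that exploits the specific boundary conditions.

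First, I would transport $(\phi,\psi)$ to the unit disk by the Cayley transform $F\colon\mathbb{R}^2_+\to D$, $F(z)=(z-i)/(z+i)$, which sends $\partial \mathbb{R}^2_+$ to $\partial D\setminus\{1\}$ (the point $\{1\}$ being the image of $\infty$). By the conformal invariance of $L$ in dimension two, $(\tilde\phi,\tilde\psi):=(\phi\circ F^{-1},\lambda^{-1/2}\psi\circ F^{-1})$, with $\lambda$ the conformal factor of $F^{-1}$, is a smooth Dirac-harmonic map on $D\setminus\{1\}$ of the same finite energy, satisfying $\tilde\phi\equiv c$ and $\mathcal{B}\tilde\psi=0$ on $\partial D\setminus\{1\}$.

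Second, I would remove the isolated boundary singularity at $\{1\}$. Since the energy measure is absolutely continuous, for any $\delta>0$ the energy on the half-disk $D^+_r(1)$ falls below $(\epsilon_0)^2$ once $r$ is small, so Theorem \ref{smthm-bound} (with $\tau=0$, $h=0$) applies on shrinking half-disks around $\{1\}$ and gives $(\tilde\phi,\tilde\psi)\in W^{2,p}\times W^{1,q}$ across $\{1\}$; a standard bootstrap then yields a smooth extension up to $\partial D$. With the boundary data $\tilde\phi\equiv c$ and $\mathcal{B}\tilde\psi=0$ holding on the entire circle $\partial D$, which are exactly the conditions compatible with reflection (using $G^2=\mathrm{Id}$, $\nabla G=0$, and $G\cdot X=-X\cdot G$ on $TD$), I would double $(\tilde\phi,\tilde\psi)$ across $\partial D$ to obtain a smooth Dirac-harmonic map $(\Phi,\Psi)\colon S^2\to N$ of finite energy, with $\Phi|_\gamma\equiv c$ on the equator $\gamma:=\partial D\subset S^2$ and $\Phi$ invariant under the reflection across $\gamma$.

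Finally, I would show that $(\Phi,\Psi)$ is trivial, which pulls back to the desired conclusion. The constancy $\Phi|_\gamma\equiv c$ kills the tangential derivative on $\gamma$, while the reflection symmetry forces the normal derivative to vanish on $\gamma$ (being odd under reflection), so $d\Phi\equiv 0$ on $\gamma$. Substituting $d\Phi|_\gamma=0$ into $\Delta\Phi=A(\Phi)(d\Phi,d\Phi)-\mathrm{Re}\,P(\mathcal{A}(d\Phi(e_\alpha),e_\alpha\cdot\Psi);\Psi)$ gives $\partial_\nu^2\Phi|_\gamma=0$; iteratively differentiating the coupled equations in the normal direction along $\gamma$ and using the vanishing of all lower-order normal derivatives of $\Phi$ shows that every normal derivative of $\Phi$ vanishes on $\gamma$. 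Real analyticity of $\Phi$ near $\gamma$ (after embedding $N\hookrightarrow\mathbb{R}^K$) then forces $\Phi\equiv c$ in a neighborhood of $\gamma$, and connectedness of $S^2$ propagates this to $\Phi\equiv c$ on all of $S^2$. With $\Phi$ constant, $\Psi$ satisfies $\slashed\partial\Psi=0$ on $S^2$ with values in a trivial bundle, and the Lichnerowicz identity combined with the positivity of the scalar curvature of $S^2$ gives $\Psi\equiv 0$. Pulling back through $F$ then yields $\phi\equiv c$ and $\psi\equiv 0$ on $\mathbb{R}^2_+$. The main obstacle is the smooth doubling of the spinor across $\partial D$, where one must make explicit the reflection rule on $\Sigma D\otimes\tilde\phi^*TN$ dictated by $\mathcal{B}$ and check that the reflected pair solves the system weakly, together with the inductive vanishing of normal derivatives of $\Phi$ on $\gamma$ in the presence of the curvature term coupling $\Phi$ to $\Psi$.
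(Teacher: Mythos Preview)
The paper does not give its own proof of this statement; it is quoted verbatim as Theorem~1.4 of \cite{jost-Liu-Zhu-02} and used as a black box in the blow-up analysis. So there is no proof in the paper to compare against.

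That said, your proposal has a genuine gap at the doubling step. You assert that the boundary data $\tilde\phi|_{\partial D}=c$ and $\mathcal{B}\tilde\psi|_{\partial D}=0$ are ``compatible with reflection'' and then pass directly to a \emph{smooth} Dirac-harmonic pair $(\Phi,\Psi)$ on $S^2$ invariant under the reflection across the equator $\gamma$. But the even reflection of $\tilde\phi$ is $C^1$ across $\gamma$ only if $\partial_\nu\tilde\phi|_{\partial D}=0$ already, and nothing in the boundary conditions gives you that a priori: the constant Dirichlet datum controls tangential derivatives, not the normal one, and the chiral condition on $\tilde\psi$ does not by itself produce a Neumann-type constraint for $\tilde\phi$ through the coupling. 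If you instead try to argue that the reflected pair is a \emph{weak} solution on $S^2$ and then invoke regularity, the integration-by-parts across $\gamma$ produces exactly the uncontrolled boundary term $\int_\gamma \partial_\nu\tilde\phi\cdot\eta$, so the weak formulation fails unless that term vanishes. Your subsequent step, ``reflection symmetry forces the normal derivative to vanish,'' thus assumes precisely what you need to prove, and the entire inductive vanishing of higher normal derivatives rests on this circularity. A second, smaller issue is that your unique-continuation step from $\gamma$ relies on real analyticity of $\Phi$, which requires $N$ to be real analytic; the theorem is stated for a compact Riemannian target without that hypothesis.

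The argument actually used in \cite{jost-Liu-Zhu-02} (and standard for Liouville-type results in this setting) avoids doubling the map altogether: one works with the holomorphic quadratic differential associated with a Dirac-harmonic map---the analogue of the Hopf differential---and shows that the boundary conditions $\phi|_{\partial\mathbb{R}^2_+}=\text{const.}$ and $\mathcal{B}\psi|_{\partial\mathbb{R}^2_+}=0$ force this differential to be real on the real axis. Schwarz reflection then extends it to an entire holomorphic function which, by the finite-energy assumption, is integrable and hence identically zero. Vanishing of the quadratic differential yields (weak) conformality of $\phi$, and a map from a half-plane with constant boundary values and vanishing Hopf differential is constant; the spinor then solves $\slashed\partial\psi=0$ with $\mathcal{B}\psi=0$ and must vanish. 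If you want to salvage your approach, you should replace the doubling of the full pair by reflection of this scalar holomorphic quantity.
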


\

\section{Energy identity}

\

In this section, we will prove our main Theorem \ref{thm:main-01}. Since the interior blow-up behavior was already studied in \cite{jost-Liu-Zhu-04}, we only need to consider the boundary blow-up behavior.

Firstly, we consider the following simpler case of a boundary blow-up point.

\begin{thm}\label{thm:02}
Let $\phi_n \in C^{2}(D^+_1(0),N)$, $\psi_n \in C^{1}(D^+_1(0),\Sigma D^+_1(0)\otimes \phi_n^{*}TN)$ be a sequence of approximate Dirac-harmonic maps satisfying
\begin{itemize}
\item[(a)]  $\ \|\phi_n\|_{W^{1,2}(D^+)}+\|\psi_n\|_{L^{4}(D^+)}+\|\tau_n\|_{L^{2}(D^+)}
    +\|h_n\|_{L^{4}(D^+)}\leq \Lambda,$ \\
\item[(b)] $\ (\phi_n,\psi_n)\rightharpoonup (\phi,\psi) \mbox{ weakly in }W_{loc}^{2,2}(D^+\setminus\{0\})\times W_{loc}^{1,2}(D^+\setminus\{0\})\ as\ n\to\infty$.
\end{itemize}
Then there exist a subsequence of $(\phi_n,\psi_n)$ (still denoted by $(\phi_n,\psi_n)$) and a nonnegative integer $L$ such that, for any $i=1,...,L$, there exist  points $x^i_n$,
positive numbers $\lambda^i_n$ and a nonconstant Dirac-harmonic sphere $(\sigma^i,\xi^i):S^2\to N$ such that:
\begin{itemize}
\item[(1)]  $\ x^i_n\to 0,\ \lambda^i_n\to 0$, as $n\to\infty$;\\
\item[(2)]  $\ \frac{dist(x^i_n,\partial^0D^+)}{\lambda^i_n}\to\infty\ $, as $n\to\infty$;\\
\item[(3)]  $\ \lim_{n\to\infty}\big(\frac{\lambda^i_n}{\lambda^j_n}+\frac{\lambda^j_n}{\lambda^i_n} +\frac{|x^i_n-x^j_n|}{\lambda^i_n+\lambda^j_n}\big)=\infty$ for any $i\neq j$;\\
\item[(4)] $\ (\sigma^i,\xi^i)$ is the weak limit of $(\phi_n(x^i_n+\lambda^i_nx),\psi_n(x^i_n+\lambda^i_nx))$ in $W^{1,2}_{loc}(\mathbb{R}^2)\times L^{4}_{loc}(\mathbb{R}^2)$;\\
\item[(5)]  \textbf{Energy identity:} we have
\begin{eqnarray}
\lim_{n\to\infty}E(\phi_n)&=&E(\phi)+\sum_{i=1}^{L}E(\sigma^i),\label{equation:thm02-1}\\
\lim_{n\to\infty}E(\psi_n)&=&E(\psi)+\sum_{i=1}^{L}E(\xi^i)\label{equation:thm02-2}.
\end{eqnarray}
\end{itemize}
\end{thm}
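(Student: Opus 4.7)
The plan is to iterate a boundary blow-up procedure, adapted from the interior version in \cite{jost-Liu-Zhu-04} using the boundary tools developed in Section~2. With $\overline{\epsilon}$ chosen smaller than $\epsilon_0,\epsilon_0'$ from Theorems~\ref{smthm-bound}, \ref{smthm} and than the rigidity constant $\epsilon_1$ for Dirac-harmonic spheres, pick $x_n^1\in\overline{D}^+_{1/2}$ and $\lambda_n^1\to 0^+$ satisfying
\[
\int_{D_{\lambda_n^1}(x_n^1)\cap D^+}(|\nabla\phi_n|^2+|\psi_n|^4)=\sup_{x\in\overline{D}^+_{1/2}}\int_{D_{\lambda_n^1}(x)\cap D^+}(|\nabla\phi_n|^2+|\psi_n|^4)=\tfrac{\overline{\epsilon}}{2};
\]
if no such scale exists, small-energy regularity forces strong convergence near $0$, so $L=0$ and there is nothing to prove. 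Otherwise, rescale
\[
\tilde\phi_n(y)=\phi_n(x_n^1+\lambda_n^1 y),\qquad \tilde\psi_n(y)=(\lambda_n^1)^{1/2}\psi_n(x_n^1+\lambda_n^1 y)
\]
on $\Omega_n=\{y:x_n^1+\lambda_n^1 y\in D^+\}$; by conformal covariance $(\tilde\phi_n,\tilde\psi_n)$ is still approximately Dirac-harmonic, with rescaled error norms $\lambda_n^1\|\tau_n\|_{L^2}$ and $(\lambda_n^1)^{3/4}\|h_n\|_{L^4}$, both $o(1)$.

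Next, I would secure condition~(2) by contradiction. If $\mu_n:=\mathrm{dist}(x_n^1,\partial^0D^+)/\lambda_n^1$ stayed bounded along some subsequence, $\Omega_n$ would converge to a half-plane on which the rescaled boundary data satisfy $\tilde\varphi\to\varphi(0)$ constant and $\mathcal{B}\tilde\chi\to 0$ (the $\sqrt{\lambda_n^1}$ weight kills $\chi$), and Theorem~\ref{thm:03} would force the weak limit to be trivial, contradicting the $\overline{\epsilon}/2$ mass on $D_1$. Hence $\mu_n\to\infty$ and $\Omega_n\to\mathbb{R}^2$, and small-energy regularity off a finite concentration set gives strong $W^{2,2}_{\mathrm{loc}}\times W^{1,2}_{\mathrm{loc}}$ convergence; removable singularity then provides a Dirac-harmonic sphere. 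If this sphere is constant, all the captured mass has hidden at the finite concentration set, so I would iterate at those finer scales until a non-constant sphere $(\sigma^1,\xi^1)$ emerges. The full bubble tree is built by repeating this extraction; the separation~(3) is automatic from always choosing maximal scales, while finiteness of $L$ follows from $E(\sigma^i,\xi^i;S^2)\ge \epsilon_1$ combined with the bound $\Lambda$.

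The main obstacle is the energy identity~(5), which reduces to vanishing of the neck energy
\[
\lim_{R\to\infty}\lim_{\delta\to 0}\lim_{n\to\infty}\int_{(D_\delta(x_n^i)\setminus D_{R\lambda_n^i}(x_n^i))\cap D^+}(|\nabla\phi_n|^2+|\psi_n|^4)=0
\]
on each neck. I would decompose the neck into dyadic shells $A_k=\{2^{-k}\delta\le r\le 2^{1-k}\delta\}\cap D^+$, $r=|\cdot-x_n^i|$; on each shell small-energy regularity controls the angular derivatives $|r^{-1}\partial_\theta\phi_n|$ and $|r^{-1}\partial_\theta\psi_n|$ by the energy on the adjacent shells. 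Feeding these bounds into Corollary~\ref{cor:poho-bound} turns the radial excess $|\phi_r|^2-\tfrac12|\nabla\phi|^2=-\tfrac{1}{2}|r^{-1}\partial_\theta\phi|^2$ into an effective recursion $a_k\le \tfrac12(a_{k-1}+a_{k+1})+o(1)$ for $a_k:=\int_{A_k}|\nabla\phi_n|^2$, iterating to exponential decay in $k$ and summing to $o(1)$ in the appropriate limits. The spinor neck energy decays analogously by feeding the map decay back into the Dirac equation for $\psi_n$. The truly novel point is the boundary contribution on $\partial^0 D^+_t$ in Lemma~\ref{lem:poho-bound}: handling it requires the algebraic identity~(\ref{algebraic-property}), the chiral facts (\textbf{Fact 1} and \textbf{Fact 2}), and the trace bound $\|\psi_n\|_{L^2(\partial^0 D^+_{1/2})}\le C\|\psi_n\|_{W^{1,4/3}(D^+_{1/2})}$, which together produce the $\sqrt t$-remainder in Corollary~\ref{cor:poho-bound} and substitute for the three-circle argument unavailable at the boundary.
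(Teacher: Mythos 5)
Your setup (maximal blow-up scales, Claim~(2) via Theorem~\ref{thm:03} on the half-plane, iterating to extract finitely many bubbles) matches the paper. The genuine gap is in the neck analysis for $\nabla\phi_n$, and it is not cosmetic.

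First, the pointwise identity you quote is wrong: $|\phi_r|^2-\tfrac12|\nabla\phi|^2=\tfrac12\bigl(|\phi_r|^2-|r^{-1}\partial_\theta\phi|^2\bigr)$, not $-\tfrac12|r^{-1}\partial_\theta\phi|^2$. More seriously, the recursion $a_k\le\tfrac12(a_{k-1}+a_{k+1})+o(1)$ you propose is a discrete three-circle/log-convexity scheme, and the paper explicitly states (and needs) that this type of argument is \emph{not} available at the boundary; this is precisely why Corollary~\ref{cor:poho-bound} is proved. Even ignoring that, $2a_k\le a_{k-1}+a_{k+1}$ only says the increments $a_{k+1}-a_k$ are non-decreasing; it does not iterate to exponential decay. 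You have not explained where a genuine contraction would come from, and Corollary~\ref{cor:poho-bound} does not provide one: it only trades $\int(|\phi_r|^2-\tfrac12|\nabla\phi|^2)$ for a small multiple of the angular energy plus $\psi$-terms plus $C\sqrt t$, so by itself it cannot close a shell-by-shell decay loop.

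What the paper actually does on the boundary neck is an integral (not iterative) argument that you do not mention at all: split the neck into $\Omega_1,\dots,\Omega_4$ relative to \emph{both} $x_n$ and its boundary projection $x_n'$, so that $\Omega_4$ is purely interior and is disposed of by the interior result of \cite{jost-Liu-Zhu-04}; then, on the key piece $\Omega_2=D^+_{\delta/2}(x'_n)\setminus D^+_{2d_n}(x'_n)$, reflect $\Phi_n=\phi_n-\varphi$ oddly across $\partial^0D^+$ to $\widehat{\Phi_n}$, integrate $\nabla\widehat{\Phi_n}\cdot\nabla(\widehat{\Phi_n}-\widehat{\Phi_n}^*)$ by parts with $\widehat{\Phi_n}^*$ the angular average, use the oscillation estimate $\|\widehat{\Phi_n}-\widehat{\Phi_n}^*\|_{L^\infty}\le C(\sqrt\epsilon+\delta)$ from Claim~2, and only then invoke Corollary~\ref{cor:poho-bound} to absorb the Pohozaev term; the $C\sqrt t$ remainders sum to $O(\sqrt\delta)$ over the dyadic shells. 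This reflection-and-average-subtraction mechanism is the actual substitute for the three-circle argument and is what your proposal is missing. Finally, the paper first establishes the spinor neck estimate (via a Ye/Zhao-type finite decomposition and elliptic estimates for the Dirac operator) and then uses it as input in the map-part estimate; your proposed order (map decay first, then feed into the Dirac equation) has no a priori control on $\psi_n$ over the neck to start with.
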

\begin{proof}
By assumption, without loss of generality, we may assume that $0$ is the only blow-up point of the sequence $\{(\phi_n,\psi_n)\}$ in $D^+$, $i.e.$
\begin{equation}\label{def:small-energy}
\liminf_{n\to\infty}E(\phi_n,\psi_n;D^+_r)\geq \frac{\overline{\epsilon}^2}{2}\mbox{ for all }r>0
\end{equation}
where $\overline{\epsilon}=\min\{\epsilon_0,\epsilon'_0\}$ and $\epsilon_0,\ \epsilon_0'$ are the constants in Theorem \ref{smthm-bound} and Theorem \ref{smthm}. By the standard argument of blow-up analysis (see e.g. \cite{DingWeiyueandTiangang, chen2005regularity}),  we can assume that, for any $n$, there exist sequences $x_n\to 0$ and $r_n\to 0$ such that
\begin{equation}\label{equation:25}
E(\phi_n,\psi_n;D^+_{r_n}(x_n))=\sup_{\substack{x\in D^+,r\leq r_n\\D^+_r(x)\subset D^+}}E(\phi_n,\psi_n;D^+_r(x))=\frac{\overline{\epsilon}^2}{4}.
\end{equation}

\

Firstly, we make a \textbf{Claim 1:} $\limsup_{n\to\infty}\frac{dist(x_n,\partial^0D^+)}{r_n}=\infty$.

If not, after taking a subsequence, we may assume $\lim_{n\to\infty}\frac{dist(x_n,\partial^0D^+)}{r_n}=a\geq0$.
Set
\begin{align*}
u_n(x,t):=\phi_n(x_n+r_nx),\
v_n(x,t):=\sqrt{r_n} \psi_n(x_n+r_nx),
\end{align*}
and
\[
B_n:=\{x\in\mathbb{R}^2|x_n+r_nx\in D^+\}.
\]
 Then
\[
B_n\to \mathbb{R}^2_a:=\{(x^1,x^2)|x^2\geq -a\},
\]
as $n\to\infty$. It is easy to see $(u_n,v_n)$ lives in $B_n$ and satisfies
\begin{eqnarray}\label{dh3}
\begin{cases}
r_n^2 \tau(\phi_n)=\Delta u_n+A(du_n,du_n)-  Re(P(\mathcal{A}(du_n
(e_{\alpha }),e_{\alpha }\cdot v_n );v_n)),\quad &in\quad B_n;\\
r_n^{\frac{4}{3}}h_n=\p v_n-\mathcal{A}(du_n (e_{\alpha }),e_{\alpha }\cdot v_n
),\quad &in\quad B_n,
\end{cases}
\end{eqnarray}
with the boundary data
\begin{eqnarray}
\begin{cases}
u_n(x)=\varphi(x_1+r_nx),\quad &if\quad x_n+r_nx\in \partial M;\\
\mathcal{B} v_n(x)=\sqrt{r_n}\mathcal{B} \chi(x_n+r_nx),\quad &if\quad x_n+r_nx\in \partial M.
\end{cases}
\end{eqnarray}
By \eqref{equation:25}, Theorem \ref{smthm} and Theorem \ref{smthm-bound}, we have
\begin{equation}\label{equation:04}
\|u_n\|_{W^{2,2}(D_{4R}(0)\cap B_n)}+\|v_n\|_{W^{1,2}(D_{4R}(0)\cap B_n)}\leq C(R,N)
\end{equation}
for any $D_R(0)\subset \mathbb{R}^2$ which implies
\begin{equation*}
\|u_n(x-(0,\frac{d_n}{r_n}))\|_{W^{2,2}(D_{3R}^+(0))} +\|v_n(x-(0,\frac{d_n}{r_n}))\|_{W^{1,2}(D_{3R}^+(0))}\leq C(R,N)
\end{equation*}
when $n,R$ are large, where $d_n:=dist(x_n,\partial^0D^+)$.

Then there exist a subsequence of $(u_n,v_n)$ (also denoted by $(u_n,v_n)$) and a Dirac-harmonic map $(\widetilde{u},\widetilde{v})\in W^{2,2}(\mathbb{R}_+^2)\times W^{1,2}(\mathbb{R}_+^2)$ with the boundary data $(\widetilde{u},\mathcal{B} \widetilde{v})|_{\partial \mathbb{R}^{2}_+}=(\varphi(x_0),0)$, such that for any $R>0$,
\begin{align*}
\lim_{n\to\infty}\|u_n(x-(0,\frac{d_n}{r_n}))-\widetilde{u}(x)\|_{W^{1,2}(D_{3R}^+(0))}&=0\\
\lim_{n\to\infty}\|v_n(x-(0,\frac{d_n}{r_n}))-\widetilde{v}(x)\|_{L^{4}(D_{3R}^+(0))}&=0.
\end{align*}

Set $\widetilde{u}^1(x):=\widetilde{u}(x+(0,a))$ and $\widetilde{v}^1(x):=\widetilde{v}(x+(0,a))$, then we get, for any $R>0$,
\begin{align*}
\lim_{n\to\infty}\|u_n(x)-\widetilde{u}^1(x)\|_{W^{1,2}(D_{2R}(0)\cap B_n\cap\mathbb{R}^2_a)}&=0\\
\lim_{n\to\infty}\|v_n(x)-\widetilde{v}^1(x)\|_{L^{4}(D_{2R}(0)\cap B_n\cap\mathbb{R}^2_a)}&=0.
\end{align*}
Combining this with \eqref{equation:04} and noting that the measure of $D_{2R}(0)\cap B_n\setminus\mathbb{R}^2_a$ goes to zero, we have
\begin{align*}
\lim_{n\to\infty}\|u_n(x)\|_{W^{1,2}(D_{R}(0)\cap B_n)}&=\|\widetilde{u}^1(x)\|_{W^{1,2}(D_{R}(0)\cap \mathbb{R}^2_a)}\\
\lim_{n\to\infty}\|v_n(x)\|_{L^{4}(D_{R}(0)\cap B_n)}&=\|\widetilde{v}^1(x)\|_{L^{4}(D_{R}(0)\cap \mathbb{R}^2_a)}.
\end{align*}

Therefore, by \eqref{equation:25}, we can obtain $E(\widetilde{u}^1,\widetilde{v}^1;D_1(0)\cap\mathbb{R}_a^2)=\frac{\overline{\epsilon}^2}{4}$. However, by Theorem \ref{thm:03}, we know $\widetilde{u}^1$ is a constant map and $\widetilde{v}^1\equiv 0$. This is a contradiction. We proved \textbf{Claim 1}.

\

Under the assumption $\limsup_{n\to\infty}\frac{dist(x_n,\partial^0D^+)}{r_n}=\infty$, we can see that $(u_n,v_n)$ lives in $B_n$ which tends to $\mathbb{R}^2$ as $n\to\infty$.
Moreover, for any $x\in\mathbb{R}^2$, when $n$ is sufficiently large, by \eqref{equation:25}, we have
\begin{equation}
E(u_n,v_n;D_1(x))\leq \frac{\overline{\epsilon}^2}{4}.
\end{equation}
According to Theorem \ref{smthm}, there exist a subsequence of $(u_n,v_n)$ (we still denote it by $(u_n,v_n)$) and a Dirac-harmonic map $(u^1(x),v^1(x))\in W^{2,2}(\mathbb{R}^2,N)\times W^{1,2}(\mathbb{R}^2,\Sigma \mathbb{R}^2 \otimes (u^1)^{*}TN)$ such that
\begin{align}\label{equation:02}
u_n(x)\to u^1(x) \mbox{ in }W^{1,2}_{loc}(\mathbb{R}^2),\quad
v_n(x)\to v^1(x) \mbox{ in }L^{4}_{loc}(\mathbb{R}^2),
\end{align}
as $n\to\infty$. Besides, we know $E(u^1,v^1;D_1(0))=\frac{\overline{\epsilon}^2}{4}$. By the standard theory of Dirac-harmonic maps \cite{chen2005regularity}, $(u^1(x),v^1(x))$ can be extended to a nontrivial Dirac-harmonic sphere which is usually called the first bubble.

\

By the standard induction argument in \cite{DingWeiyueandTiangang}, we only need to prove the theorem in the case where there is only one bubble. For the more bubbles case, i.e. the bubble tree, we just need to distinguish ``neck domains" which is almost the same as in the blow-up theory of approximate harmonic maps. See \cite{Li-Wang2,Chen-Li} for details. Then we can estimate the energy concentration on each ``neck domain" by using the proof of the  one bubble case.

\

Under this assumption, we have the following:

\

\noindent\textbf{Claim 2:} for any $\epsilon>0$, there exist $\delta>0$ and $R>0$ such that
\begin{equation}\label{equation:assumption-small-bound}
E(\phi_n,\psi_n;D^+_{8t}(x_n)\setminus D^+_{t}(x_n))\leq \epsilon^2 \mbox{ for any } t\in(\frac{1}{2}r_nR,2\delta)
\end{equation}
when $n$ is large enough.

\

In fact, if \eqref{equation:assumption-small-bound} is not true, then we can find $t_n\to 0$, such that $\lim_{n\to \infty}\frac{t_n}{r_n}=\infty$ and $\epsilon'>0$ such that
\begin{equation}\label{equation:10}
E(\phi_n,\psi_n;D^+_{8t_n}(x_n)\setminus D^+_{t_n}(x_n))\geq \epsilon'>0.
\end{equation}

Passing to a subsequence, we may assume $\lim_{n\to\infty}\frac{d_n}{t_n}=b\in [0,\infty].$ For simplicity of notation, we also denote $$u_n(x):=\phi_n(x_n+t_nx),v_n(x):=\sqrt{t_n}\psi_n(x_n+t_nx).$$ Denoting $B_n':=\{x\in\mathbb{R}^2|x_n+t_nx\in D^+\}$, then it is easy to see that $(u_n(x),v_n(x))$ lives in $B_n'$ and $0$ is also an energy concentration point for $(u_n,v_n)$. We have to consider the following two cases:

\

$\textbf{(a)}$ $b<\infty$.

\

Then $B_n'$ tends to $\mathbb{R}^2_b$ as $n\to \infty$. Here, we also need to consider two cases.

\

$\textbf{(a-1)}$ $(u_n,v_n)$ has no other energy concentration points except $0$.

\

By Theorem \ref{smthm}, Theorem \ref{smthm-bound} and the proof of \textbf{Claim 1}, there exists a Dirac-harmonic map $(u,v):\mathbb{R}^2_b\to N$ with boundary data $u|_{\partial \mathbb{R}^2_b}=\varphi(0)$, $\mathcal{B}v|_{\partial \mathbb{R}^2_b}=0$ satisfying, passing to a subsequence, for any $\lambda, R>0$, there hold
\begin{align*}
\lim_{n\to\infty}\|u_n(x)-u(x)\|_{W^{1,2}(D_{2R}(0)\cap B_n'\cap\mathbb{R}^2_b\setminus{D_\lambda(0)})}&=0\\
\lim_{n\to\infty}\|v_n(x)-v(x)\|_{L^{4}(D_{2R}(0)\cap B_n'\cap\mathbb{R}^2_b\setminus{D_\lambda(0)})}&=0,
\end{align*}
and
\begin{align*}
\lim_{n\to\infty}\|u_n(x)\|_{W^{1,2}(D_{R}(0)\cap B_n'\setminus{D_\lambda(0)})}&=\|u(x)\|_{W^{1,2}(D_{R}(0)\cap \mathbb{R}^2_b\setminus{D_\lambda(0)})}\\
\lim_{n\to\infty}\|v_n(x)\|_{L^{4}(D_{R}(0)\cap B_n'\setminus{D_\lambda(0)})}&=\|v(x)\|_{L^{4}(D_{R}(0)\cap \mathbb{R}^2_b\setminus{D_\lambda(0)})}.
\end{align*}
According to \eqref{equation:10}, we have
\begin{align*}
E(u,v;(D_8(0)\setminus D_1(0))\cap\mathbb{R}^2_b)=\lim_{n\to\infty}E(u_n,v_n;(D_8(0)\setminus D_1(0))\cap B_n')\geq \epsilon'.
\end{align*}
However, Theorem \ref{thm:03} tells us that $u$ is a constant map and $v\equiv 0$. This is a contradiction.

\

\noindent$\textbf{(a-2)}$ $(u_n,v_n)$ has another energy concentration point $p\neq 0$.

\

Without loss of generality, we may assume $p$ is the only energy concentration point in $D_{r_0}(p)$ for some $r_0>0$.
By the standard argument of blow-up analysis, there exist sequences $x_n'\to p$ and $r_n'\to 0$ such that
\begin{equation}\label{equation:13}
E(u_n,v_n;D_{r'_n}(x'_n)\cap B_n')=\sup_{\substack{x\in D_{r_0}(p),r\leq r_n'\\D_r(x)\subset D_{r_0}(p)}}E(u_n,v_n;D_r(x)\cap B_n')=\frac{\overline{\epsilon}^2}{4}.
\end{equation}
By \eqref{equation:25}, we have $r_n't_n\geq r_n$ and taking a subsequence, we may assume
$$\lim_{n\to\infty}\frac{d_n}{r'_nt_n}=d\in [0,\infty].$$
Furthermore, we know $d$ must be $\infty$ (the proof is the same as for  \textbf{Claim 1}). Then similar to the process of constructing the first bubble, there exists a nontrivial Dirac-harmonic map $(u^2(x),v^2(x))\in W^{2,2}(\mathbb{R}^2,N)\times W^{1,2}(\mathbb{R}^2,\Sigma \mathbb{R}^2 \otimes (u^2)^{*}TN)$ such that
\begin{align*}
u_n(x_n'+r_n'x)\to u^2(x) \mbox{ in }W^{1,2}_{loc}(\mathbb{R}^2),\quad
\sqrt{r_n'}v_n(x_n'+r_n'x)\to v^2(x) \mbox{ in }L^{4}_{loc}(\mathbb{R}^2),
\end{align*}
as $n\to\infty$. This is $$\phi_n(x_n+t_nx_n'+t_nr_n'x)\to v^2(x)\ in\ W^{1,2}_{loc}(\mathbb{R}^2)\ and\ \sqrt{t_nr_n'}\psi_n(x_n+t_nx_n'+t_nr_n'x)\to v^2(x)\ in\ L^{4}_{loc}(\mathbb{R}^2).$$ Thus, $(u^2,v^2)$ is also a bubble for the sequence $(\phi_n,\psi_n)$. This is a contradiction to the one bubble assumption.

\

\noindent$\textbf{(b)}$ $b=\infty$.

\

In this case, $B'_n$ will tend to $\mathbb{R}^2$ as $n\to \infty$. Again, we need to consider the following two cases.

\

\noindent$\textbf{(b-1)}$ $(u_n,v_n)$ has no other energy concentration points except $0$.

\

According to \eqref{equation:10}, Theorem \ref{smthm}, Theorem \ref{smthm-bound} and the process of constructing the first bubble, we know that there exists a nontrivial Dirac-harmonic map $(u^2,v^2):\mathbb{R}^2\to N$ such that, passing to a subsequence, $$u_n(x)\to u^2(x)\ in\ W^{1,2}_{loc}(\mathbb{R}^2\setminus \{0\})\ and\ v_n(x)\to v^2(x)\ in\ L^{4}_{loc}(\mathbb{R}^2\setminus \{0\}),$$ as $n\to\infty$. Then, we get the second bubble $(u^2(x),v^2(x))$ which  contradicts the ``one bubble" assumption.

\

\noindent$\textbf{(b-2)}$ $(u_n,v_n)$ has another energy concentration point $p\neq 0$.

\

Similar to  $\textbf{Case (a-2)}$, there exist sequences $x_n'\to p$ and $r_n'\to 0$ satisfying \eqref{equation:13} and $$\lim_{n\to\infty}\frac{d_n}{r'_nt_n}=\infty.$$ Moreover, by the process of constructing the first bubble, there exists a nontrivial Dirac-harmonic map $(u^2,v^2):\mathbb{R}^2\to N$  such that, as $n\to\infty$, $$u_n(x_n'+r_n'x)\to v^2(x)\ \text{in }\ W^{1,2}_{loc}(\mathbb{R}^2)\ \text{and} \ \sqrt{r_n'}v_n(x_n'+r_n'x)\to v^2(x)\ \text{in}\ L^{4}_{loc}(\mathbb{R}^2)$$ that is $$\phi_n(x_n+t_nx_n'+t_nr_n'x)\to v^2(x)\ \text{in}\ W^{1,2}_{loc}(\mathbb{R}^2)\ \text{and}\ \sqrt{t_nr_n'}\psi_n(x_n+t_nx_n'+t_nr_n'x)\to v^2(x)\ \text{in}\ L^{4}_{loc}(\mathbb{R}^2).$$ So, we get the second bubble $(u^2(x),v^2(x))$. This  also contradicts the  ``one bubble " assumption. Thus, we proved \textbf{Claim 2}.

\

Under the ``one bubble" assumption, by \eqref{equation:02}, it is easy to see that energy identity \eqref{equation:thm02-1} and \eqref{equation:thm02-2} are equivalent to
\begin{equation}\label{equation:energy-identity-01}
\lim_{R\to\infty}\lim_{\delta\to 0}\lim_{n\to \infty}E(\phi_n;D^+_\delta(x_n)\setminus D^+_{r_nR}(x_n))=0
\end{equation}
and
\begin{equation}\label{equation:energy-identity-02}
\lim_{R\to\infty}\lim_{\delta\to 0}\lim_{n\to \infty}E(\psi_n;D^+_\delta(x_n)\setminus D^+_{r_nR}(x_n))=0.
\end{equation}

Without loss of generality, we may assume $\delta=2^{m_n}r_nR$ for some positive integer $m_n$ which tends to $\infty$ as $n\to\infty$. We denote $P_i:=D^+_{2^ir_nR}(x_n)\setminus D^+_{2^{i-1}r_nR}(x_n)$.

Firstly we use a finite decomposition argument that is similar to those in \cite{Ye,zhao2007energy} to separate $\Sigma:=D^+_{\delta}(x_n)\setminus D^+_{r_nR}(x_n)$ into finite parts
\begin{align*}
\Sigma=\cup_{j=1}^{s_n}Q_j,\quad Q_j:=\cup_{i=k_{j-1}+1}^{k_j}P_i, \quad 0=k_0<k_1<,...,<k_{s_n}=m_n
\end{align*}
such that $s_n\leq S$ and
\begin{align}\label{inequality:11}
E(\phi_n,\psi_n;Q_j)\leq \frac{1}{C_1(N)},\quad j=1,...,s_n,
\end{align}
where $C_1(N)>0$ is a constant depending only on $N$ to be determined later and $S$ is a uniform integer for all $n$ large enough.

From \eqref{equation:assumption-small-bound}, for any $\epsilon<\frac{1}{2C_1(N)}$, we have
\begin{equation*}
E(\phi_n,\psi_n;P_i)<\epsilon<\frac{1}{2C_1(N)},\quad i=1,...,m_n
\end{equation*}
when $n$ is large.

If
\[
E(\phi_n,\psi_n;\Sigma)\leq \frac{1}{C_1(N)},
\]
let $k_1=m_n$ and then $Q_1=\Sigma$. Otherwise, we can choose an integer $1\leq k_1<m_n$ such that
\begin{align*}
\frac{1}{2C_1(N)}<E(\phi_n,\psi_n;Q_1)\leq \frac{1}{C_1(N)} \quad \text{and} \quad E(\phi_n,\psi_n;Q_1\cup P_{k_1+1})> \frac{1}{C_1(N)}.
\end{align*}
This is the first step of the division. Inductively, suppose that $k_j$ is chosen such that $E(\phi_n,\psi_n;Q_j)\leq \frac{1}{C_1(N)}$. If
\[
E(\phi_n,\psi_n;\cup_{i=k_j+1}^{m_n}P_i)\leq \frac{1}{C_1(N)},
\]
let $k_{j+1}=m_n$, thus $Q_{j+1}=\cup_{i=k_j+1}^{m_n}P_i$. If not, then similar to the first step, we can find $k_j<k_{j+1}<m_n$ such that
\begin{align*}
\frac{1}{2C_1(N)}<E(\phi_n,\psi_n;Q_{j+1})\leq \frac{1}{C_1(N)} \quad and \quad E(\phi_n,\psi_n;Q_{j+1}\cup P_{k_{j+1}+1})> \frac{1}{C_1(N)}.
\end{align*}
Since $E(\phi_n,\psi_n)$ is uniformly bounded by $\Lambda$, we will finish our division after at most $S=[2C_1(N)\Lambda]+1$ steps. So we have finished the division.

Take a cut-off function $\eta\in C_0^\infty(D^+_{2^{k_j+1}r_nR}(x_n)\setminus D^+_{2^{k_{j-1}-1}r_nR}(x_n))$ such that $0\leq\eta \leq1$ and $\eta|_{D^+_{2^{k_j}r_nR}(x_n)\setminus D^+_{2^{k_{j-1}}r_nR}(x_n)}\equiv 1$ and
\begin{align*}
|\nabla\eta|\leq\frac{C}{2^{k_j}r_nR} \quad &on\quad D^+_{2^{k_j+1}r_nR}(x_n)\setminus D^+_{2^{k_{j}}r_nR}(x_n)\quad and\\
|\nabla\eta|\leq\frac{C}{2^{k_{j-1}}r_nR} \quad &on\quad D^+_{2^{k_{j-1}}r_nR}(x_n)\setminus D^+_{2^{k_{j-1}-1}r_nR}(x_n).
\end{align*}

By the standard elliptic estimates, we have
\begin{align*}
&\|\eta\psi_n\|_{W^{1,4/3}(D_1^+)}\\&\leq
C\|\eta\p\psi_n+\nabla\eta\cdot\psi_n\|_{L^{\frac{4}{3}}(D_1^+)} +C\|\eta\mathcal{B}\chi\|_{W^{1/4,4/3}(\partial D^+_1)}\\
&\leq
\frac{1}{4}C(N)(\||d\phi_n||\eta\psi_n|\|_{L^{\frac{4}{3}}(\Sigma)}+\|\eta| h_n|\|_{L^{\frac{4}{3}}(\Sigma)})
+C\||\nabla\eta||\psi_n|\|_{L^{\frac{4}{3}}(\Sigma)}+C\|\eta\mathcal{B}\chi\|_{W^{1/4,4/3}(\partial^0 D^+_\delta)}\\
&\leq
\frac{1}{4}C(N)\|d\phi_n\|_{L^2(D_{2^{k_j+1}r_nR}(x_n)\setminus D_{2^{k_{j-1}-1}r_nR}(x_n))}\|\eta\psi_n\|_{L^4(\Sigma)}+C\| h_n\|_{L^{\frac{4}{3}}(\Sigma)}\\
&\quad+C\|\nabla\eta\psi_n\|_{L^{\frac{4}{3}}(P_{k_{j-1}}\cup P_{k_j+1})}+C\|\eta\mathcal{B}\chi\|_{W^{1/4,4/3}(\partial^0 D^+_\delta)}\\
&\leq
\frac{1}{4}C(N)
\frac{2}{\sqrt{C_1(N)}}\|\eta\psi_n\|_{L^4(\Sigma)}+C\|\psi_n\|_{L^{4}(P_{k_{j-1}}\cup P_{k_j+1})}+C\| h_n\|_{L^{\frac{4}{3}}(\Sigma)}+C\|\eta\mathcal{B}\chi\|_{W^{1/4,4/3}(\partial^0 D^+_\delta)},
\end{align*}
where the last inequality is from \eqref{equation:assumption-small-bound} and \eqref{inequality:11}. Then, taking $C_1(N)=C^2(N)+1$, by \eqref{equation:assumption-small-bound} and Sobolev embedding, we have
\begin{align*}
\|\psi_n\|_{L^4(Q_j)}+\|\nabla\psi_n\|_{L^{4/3}(Q_j)}
&\leq
C\|\psi_n\|_{L^{4}(P_{k_{j-1}}\cup P_{k_j+1})}+C\| h_n\|_{L^{\frac{4}{3}}(\Sigma)}+C\|\eta\mathcal{B}\chi\|_{W^{1/4,4/3}(\partial^0 D^+_\delta)}\\&\leq C(\Lambda,\|\chi\|_{C^1})(\sqrt{\epsilon}+\delta).
\end{align*}
So,
\begin{align*}
\|\psi_n\|_{L^4(\Sigma)}+\|\nabla\psi_n\|_{L^{4/3}(\Sigma)}
&\leq
\sum_{j=1}^{s_n}(\|\psi_n\|_{L^4(Q_j)}+\|\nabla\psi_n\|_{L^{4/3}(Q_j)})\leq
CS(\sqrt{\epsilon}+\delta).
\end{align*}
This is \eqref{equation:energy-identity-02}.

\

Suppose $x_n'\in\partial^0 D^+$ is the projection of $x_n$, $i.e.$ $d_n=dist(x_n,\partial^0D^+)=|x_n-x_n'|$. Similar to the boundary blow-up cases for approximate harmonic maps studied in \cite{jost-Liu-Zhu-03,jost-Liu-Zhu-05}, we decompose the neck domain $D^+_\delta(x_n)\setminus D^+_{r_nR}(x_n)$ as follows
\begin{align*}
D^+_\delta(x_n)\setminus D^+_{r_nR}(x_n)&=D^+_\delta(x_n)\setminus D^+_{\frac{\delta}{2}}(x'_n)\cup D^+_{\frac{\delta}{2}}(x'_n)\setminus D^+_{2d_n}(x'_n)\\
&\quad\cup D^+_{2d_n}(x'_n)\setminus D^+_{d_n}(x_n)\cup D^+_{d_n}(x_n)\setminus D^+_{r_nR}(x_n)\\
&:=\Omega_1\cup\Omega_2\cup\Omega_3\cup\Omega_4,
\end{align*}
when $n$ is large.

Since $\lim_{n\to\infty}d_n=0$ and $\lim_{n\to\infty}\frac{d_n}{r_n}=\infty$, when $n$ is large enough, it is easy to see that
\[
\Omega_1\subset D^+_\delta(x_n)\setminus D^+_{\frac{\delta}{4}}(x_n),\quad and \quad\Omega_3\subset D^+_{4d_n}(x_n)\setminus D^+_{d_n}(x_n).
\]

Moreover, for any $d_n\leq t\leq \delta$, there holds
\[
D^+_{2t}(x'_n)\setminus D^+_{t}(x'_n)\subset D^+_{4t}(x_n)\setminus D^+_{t/2}(x_n).
\]

By assumption \eqref{equation:assumption-small-bound}, we have
\begin{equation}\label{inequality:12}
E(\phi_n,\psi_n;\Omega_1)+E(\phi_n,\psi_n;\Omega_3)\leq \epsilon^2
\end{equation}
and
\begin{equation}
E(\phi_n,\psi_n;D^+_{2t}(x'_n)\setminus D^+_{t}(x'_n))\leq \epsilon^2 \mbox{ for any } t\in(d_n, \delta).
\end{equation}

By \eqref{equation:assumption-small-bound}, Theorem \ref{smthm}, Theorem \ref{smthm-bound} and the standard scaling argument, we get
\begin{align}\label{inequality:06}
&Osc_{D^+_{2t}(x'_n)\setminus D^+_{t}(x'_n)}\phi_n\notag\\&\leq C(\|\nabla \phi_n\|_{L^2(D^+_{4t}(x'_n)\setminus D^+_{t/2}(x'_n))}+\| \psi_n\|_{L^4(D^+_{4t}(x'_n)\setminus D^+_{t/2}(x'_n))}+\|\nabla \varphi\|_{L^2(D^+_{4t}(x'_n)\setminus D^+_{t/2}(x'_n))}\notag\\
&\quad+t\|\nabla^2 \varphi\|_{L^2(D^+_{4t}(x'_n)\setminus D^+_{t/2}(x'_n))}+t\|\tau(u_n)\|_{L^2(D^+_{4t}(x'_n)\setminus D^+_{t/2}(x'_n))})\notag\\
&\leq C(\sqrt{\epsilon}+\delta),
\end{align}
for any $t\in(2r_nR, \frac{1}{2}\delta)$, where $C=C(\Lambda,N,\|\varphi\|_{C^2},\|\chi\|_{C^1})$ is a positive constant.

Noting that $\Omega_4=D^+_{d_n}(x_n)\setminus D^+_{r_nR}(x_n)=D_{d_n}(x_n)\setminus D_{r_nR}(x_n)$, by the energy identity of approximate Dirac-harmonic maps with interior blow-up points (see Theorem 1.2 in \cite{jost-Liu-Zhu-04}), there holds
\begin{equation}\label{equation:26}
\lim_{R\to\infty}\lim_{n\to 0}E(u_n;D_{d_n}(x_n)\setminus D_{r_nR}(x_n))=0.
\end{equation}

Therefore, we just need to estimate the energy concentration in $\Omega_2$. Here, we use a similar method as in \cite{jost-Liu-Zhu-03,jost-Liu-Zhu-05}.

Define $\widehat{\Omega_2}:=D_{\frac{\delta}{2}}(x'_n)\setminus D_{2d_n}(x'_n)$, $\Phi_n(x):=\phi_n(x)-\varphi(x),\ x\in \Omega_2$ and
\begin{align}\label{equation:15}
\widehat{\Phi_n}(x):=
\begin{cases}
\Phi_n(x),\ &x\in \Omega_2,\\
-\Phi_n(x'),\ &x\in \widehat{\Omega_2}\setminus\Omega_2,
\end{cases}
\end{align}
where $x=(x^1,x^2)$ and $x'=(x^1,-x^2)$. It is easy to see that $\widehat{\Phi_n}(x)\in W^{2,\infty}(\widehat{\Omega_2})$ and satisfies the following equation
\begin{align}\label{equation:16}
\Delta\widehat{\Phi_n}(x)=
\begin{cases}
\Delta\phi_n(x)-\Delta\varphi(x),\ &x\in \Omega_2,\\
-\Delta\phi_n(x')+\Delta\varphi(x'),\ &x\in \widehat{\Omega_2}\setminus\Omega_2,
\end{cases}
\end{align}
where $\Delta\phi_n(x)=-A(d \phi_n,d \phi_n)(x)+Re\left( P(\mathcal{A}(d\phi_n(e_\alpha),e_\alpha\cdot\psi_n);\psi_n)\right)(x)+\tau(u_n)(x)$.

Without loss of generality, we may also assume $\frac{1}{2}\delta=2^{m_n'}(2d_n)$, where $m_n'$ is a positive integer which tends to $\infty$ as $n\to\infty$. Setting $P_i':=D^+_{2^{i+1}d_n}(x_n')\setminus D^+_{2^{i}d_n}(x_n')$ and $\widehat{P_i'}:=D_{2^{i+1}d_n}(x_n')\setminus D_{2^{i}d_n}(x_n')$,

Set
\[
\widehat{\Phi_n}^*(r):=\frac{1}{2\pi }\int_{0}^{2\pi}\widehat{\Phi_n}(r,\theta)d\theta,
\]
where $(r,\theta)$ are the polar coordinates at $x_n'$. By \eqref{inequality:06} and \eqref{equation:15}, we have
\begin{align}\label{equation:18}
\|\widehat{\Phi_n}(x)-\widehat{\Phi_n}^*(x)\|_{L^\infty(\widehat{\Omega_2})}&\leq \sup_{1\leq i\leq m_n'}\|\widehat{\Phi_n}(x)-\widehat{\Phi_n}^*(x)\|_{L^\infty(\widehat{P_i'})} \leq \sup_{1\leq i\leq m_n'}\|\widehat{\Phi_n}(x)\|_{Osc(\widehat{P_i'})}\notag\\
&\leq 2\sup_{1\leq i\leq m_n'}\|\Phi_n(x)\|_{Osc(P_i')}\leq 2\sup_{1\leq i\leq m_n'}\|\phi_n(x)\|_{Osc(P_i')}+C\delta\|\nabla\varphi\|_{L^\infty}\notag\\& \leq C(N,\Lambda,\|\varphi\|_{C^{2}},\|\chi\|_{C^1})( \sqrt{\epsilon}+\delta).\end{align}

Integrating by parts, we get
\begin{equation*}
\int_{\widehat{P_i'}}\nabla\widehat{\Phi_n}\nabla(\widehat{\Phi_n}-\widehat{\Phi_n}^*)dx=
\int_{\partial \widehat{P_i'}}(\widehat{\Phi_n}-\widehat{\Phi_n}^*)\frac{\partial \widehat{\Phi_n}}{\partial r}-\int_{\widehat{P_i'}}(\widehat{\Phi_n}-\widehat{\Phi_n}^*)\Delta\widehat{\Phi_n}dx.
\end{equation*}
On the one hand, we have
\begin{align*}
\int_{\widehat{P_i'}}\nabla\widehat{\Phi_n}\nabla(\widehat{\Phi_n}-\widehat{\Phi_n}^*)dx&=
\int_{\widehat{P_i'}}|\nabla\widehat{\Phi_n}|^2dx-\int_{\widehat{P_i'}}\frac{\partial \widehat{\Phi_n}}{\partial r}\frac{\partial \widehat{\Phi_n}^*}{\partial r}dx\\
&\geq
\int_{\widehat{P_i'}}|\nabla\widehat{\Phi_n}|^2dx-(\int_{\widehat{P_i'}}|\frac{\partial \widehat{\Phi_n}}{\partial r}|^2dx)^{\frac{1}{2}}(\int_{\widehat{P_i'}}|\frac{\partial \widehat{\Phi_n}^*}{\partial r}|^2dx)^{\frac{1}{2}}\\
&\geq
\int_{\widehat{P_i'}}|\nabla\widehat{\Phi_n}|^2dx-\int_{\widehat{P_i}}|\frac{\partial \widehat{\Phi_n}}{\partial r}|^2dx\\
&=
\frac{1}{2}\int_{\widehat{P_i'}}|\nabla\widehat{\Phi_n}|^2dx -\int_{\widehat{P_i'}}(|\frac{\partial \widehat{\Phi_n}}{\partial r}|^2-\frac{1}{2}|\nabla\widehat{\Phi_n}|^2 )dx\\
&=
\int_{P_i'}|\nabla\Phi_n|^2dx-2\int_{P_i'}(|\frac{\partial \Phi_n}{\partial r}|^2-\frac{1}{2}|\nabla\Phi_n|^2)dx.
\end{align*}
By direct computation, we obtain
\begin{align*}
&\int_{P_i'}|\nabla\Phi_n|^2dx-2\int_{P_i'}(|\frac{\partial \Phi_n}{\partial r}|^2-\frac{1}{2}|\nabla\Phi_n|^2)dx\\&=\int_{P_i'}|\nabla \phi_n|^2dx-2\int_{P_i'}(|\frac{\partial \phi_n}{\partial r}|^2-\frac{1}{2}|\nabla \phi_n|^2)dx+4\int_{P_i'}(\frac{\partial \phi_n}{\partial r}\frac{\partial \varphi}{\partial r}-\nabla \phi_n\nabla\varphi)dx\\&\quad+2\int_{P_i'}(|\nabla \varphi|^2-|\frac{\partial \varphi}{\partial r}|^2)dx\\
&\geq
\int_{P_i'}|\nabla \phi_n|^2dx-2\int_{P_i'}(|\frac{\partial \phi_n}{\partial r}|^2-\frac{1}{2}|\nabla \phi_n|^2)dx-C2^id_n.
\end{align*}

On the other hand, by \eqref{equation:16} and \eqref{equation:18}, we have
\begin{align*}
\int_{\widehat{P_i'}}(\widehat{\Phi_n}-\widehat{\Phi_n}^*)\Delta\widehat{\Phi_n}dx&\leq
C(\sqrt{\epsilon}+\delta)\int_{P_i'}|d\phi_n|^2 dx +C(\sqrt{\epsilon}+\delta)\int_{P_i'}|d\phi||\psi_n|^2dx\\&\quad+C(\sqrt{\epsilon}+\delta)\int_{P_i'} (|\tau_n|+|\Delta\varphi|)dx\\
&\leq
C(\sqrt{\epsilon}+\delta)\int_{P_i'}|d\phi_n|^2dx+C(\sqrt{\epsilon}+\delta)\int_{P_i'}|\psi_n|^4 dx +C(\sqrt{\epsilon}+\delta)2^id_n.
\end{align*}

From the above, by Corollary \ref{cor:poho-bound} (taking $\varepsilon=\frac{1}{2}$), we get
\begin{align*}
\int_{P_i'}|d\phi_n|^2dx\leq&
\int_{\partial \widehat{P_i'}}(\widehat{\Phi_n}-\widehat{\Phi_n}^*)\frac{\partial \widehat{\Phi_n}}{\partial r}+
\int_{P_i'}(|\frac{\partial \phi_n}{\partial r}|^2-\frac{1}{2}|\nabla \phi_n|^2)dx+C(\sqrt{\epsilon}+\delta)\int_{P_i'}|d\phi_n|^2dx\\&+C(\sqrt{\epsilon}+\delta)\int_{P_i'}|\psi_n|^4dx
+C2^id_n\notag\\
\leq&
\int_{\partial \widehat{P_i'}}(\widehat{\Phi_n}-\widehat{\Phi_n}^*)\frac{\partial \widehat{\Phi_n}}{\partial r}+(\frac{1}{2}+C(\sqrt{\epsilon}+\delta))\int_{P_i'}|d\phi_n|^2dx+C\int_{P_i'}|\psi_n|^4dx
\\&+C\int_{P_i'}|\nabla\psi_n|^{\frac{4}{3}}dx+
C2^id_n.
\end{align*}

Summing $i$ from $1$ to $m_n'$, we get
\begin{align}\label{inequality:14}
(\frac{1}{2}-C(\sqrt{\epsilon}+\delta))\int_{\Omega_2}|\nabla\phi_n|^2dx
&\leq
\int_{\partial D_{\delta/2}(x_n')}(\widehat{\Phi_n}-\widehat{\Phi_n}^*)\frac{\partial \widehat{\Phi_n}}{\partial r}-\int_{\partial D_{2d_n}(x_n')}(\widehat{\Phi_n}-\widehat{\Phi_n}^*)\frac{\partial \widehat{\Phi_n}}{\partial r}\notag\\&\quad+C\int_{\Omega_2}|\nabla\psi_n|^{4/3}dx+C\int_{\Omega_2}|\psi_n|^4dx+
C\delta.
\end{align}
As for the boundary term, by trace theory, we have
\begin{align*}
\int_{\partial D_{\delta/2}(x_n')}(\widehat{\Phi_n}-\widehat{\Phi_n}^*)\frac{\partial \widehat{\Phi_n}}{\partial r}&\leq C(\sqrt{\epsilon}+\delta)\int_{\partial D_{\delta/2}(x_n')}|\nabla \widehat{\Phi_n}|\\
&\leq C(\sqrt{\epsilon}+\delta)\int_{\partial^+ D_{\delta/2}(x_n')}(|\nabla \phi_n|+|\nabla\varphi|)\\
&\leq
C(\sqrt{\epsilon}+\delta)\left(\|\nabla \phi_n\|_{L^2(D^+_{\delta}\setminus D^+_{\frac{1}{4}\delta} )}+\delta\|\nabla^2 \phi_n\|_{L^2(D^+_{\delta}\setminus D^+_{\frac{1}{4}\delta} )}+1\right)\\
&\leq
C(\sqrt{\epsilon}+\delta)\big(\|\nabla \phi_n\|_{L^2(D^+_{\frac{4}{3}\delta}\setminus D^+_{\frac{1}{6}\delta} )}+\| \psi_n\|_{L^4(D^+_{\frac{4}{3}\delta}\setminus D^+_{\frac{1}{6}\delta} )}\\&\quad+\|\nabla \varphi\|_{L^2(D^+_{\frac{4}{3}\delta}\setminus D^+_{\frac{1}{6}\delta} )}+\delta\|\nabla^2\varphi\|_{L^2(D^+_{\frac{4}{3}\delta}\setminus D^+_{\frac{1}{6}\delta} )}\\&\quad+\delta\|\tau_n\|_{L^2(D^+_{\frac{4}{3}\delta}\setminus D^+_{\frac{1}{6}\delta} )}+1\big)\\&\leq C(\sqrt{\epsilon}+\delta),
\end{align*}
where the last second inequality can be derived from Theorem \ref{smthm-bound}.

Also, there holds
\begin{align*}
\int_{\partial D_{2d_n}}(\widehat{\Phi_n}-\widehat{\Phi_n}^*)\frac{\partial \widehat{\Phi_n}}{\partial r}
\leq C(\sqrt{\epsilon}+\delta).
\end{align*}
Putting these in \eqref{inequality:14} and taking $\epsilon$ and $\delta$ sufficient small, we have
\begin{align}
\int_{\Omega_2}|\nabla\phi_n|^2dx
&\leq C\int_{\Omega_2}|\nabla\psi_n|^{4/3}dx+C\int_{\Omega_2}|\psi_n|^4dx+
C(\sqrt{\epsilon}+\delta).
\end{align}
Combining this with \eqref{inequality:12}, \eqref{equation:26} and \eqref{equation:energy-identity-02}, we will obtain \eqref{equation:energy-identity-01} and we finished the proof of Theorem \ref{thm:02}.
\end{proof}

\

\begin{proof}[\textbf{Proof of Theorem \ref{thm:main-01}:}]
It is easy to see that Theorem \ref{thm:main-01} is a consequence of the interior blow-up case, i.e. Theorem 1.2 in \cite{jost-Liu-Zhu-04} and the model case of boundary blow-ups, i.e. Theorem \ref{thm:02}.
\end{proof}

\vskip1cm


\providecommand{\bysame}{\leavevmode\hbox to3em{\hrulefill}\thinspace}
\providecommand{\MR}{\relax\ifhmode\unskip\space\fi MR }
\providecommand{\MRhref}[2]{%
  \href{http://www.ams.org/mathscinet-getitem?mr=#1}{#2}
}
\providecommand{\href}[2]{#2}

\end{document}